\newtheorem{theorem}{Theorem}
\newtheorem{corollary}[theorem]{Corollary}
\newtheorem{lemma}[theorem]{Lemma}
\newtheorem{definition}[theorem]{Definition}
\newtheorem{notation}[theorem]{Notation}
\newtheorem{remark}[theorem]{Remark}
\newtheorem{example}[theorem]{Example}
\begin{document}
	
\begin{frontmatter}

\title{Weak Pontryagin's Maximum Principle for Optimal 
Control Problems Involving a General Analytic Kernel\tnoteref{note}}

\tnotetext[note]{This is a preprint whose final form is published by Elsevier in the book 
\emph{Fractional Order Systems and Applications in Engineering}.
Submitted 10/Sept/2020; Revised 22/Nov/2020 and 18/May/2021; Accepted 05/Sept/2021.
This research is part of first author's Ph.D. project, 
which is carried out at University of Aveiro.}

\author{Fa\"{\i}\c{c}al Nda\"{\i}rou}
\ead{faical@ua.pt}
\ead[url]{https://orcid.org/0000-0002-0119-6178}

\author{Delfim F. M. Torres}
\ead{delfim@ua.pt}
\ead[url]{https://orcid.org/0000-0001-8641-2505}

\address{Center for Research and Development in Mathematics and Applications (CIDMA),
Department of Mathematics, University of Aveiro, 3810-193 Aveiro, Portugal}

% ------------------------------------------

\begin{abstract}
We prove a duality relation and an integration by parts formula 
for fractional operators with a general analytical kernel. Based
on these basic results, we are able to prove a new Gr\"onwall's inequality
and continuity and differentiability of solutions of control differential equations. 
This allow us to obtain a weak version of Pontryagin's maximum principle.
Moreover, our approach also allow us to consider mixed problems with both integer 
and fractional order operators and derive necessary optimality conditions 
for isoperimetric variational problems and other problems of the calculus of variations.
\end{abstract}

\begin{keyword}
fractional operators with general analytical kernels 
\sep optimal control problems 
\sep Pontryagin extremals
\sep calculus of variations
\sep isoperimetric problems

\MSC[2020] 26A33 \sep 49K15
\end{keyword}

\end{frontmatter}

% ------------------------------------------

\section{Introduction}

Integration by parts is a powerful tool when two functions are multiplied together, 
but is also helpful in many other ways. In fact, applications of integration by parts 
abound, including the laws of Bessel bridges via hypergeometric functions \cite{MR4125793},
surface measures on levels sets induced by Brownian functionals \cite{MR4092082},
reductions of Feynman integrals \cite{MR4089288}, and approximation theory \cite{MR4017113}.

Another important tool, allowing to bound a function that is known to satisfy 
a certain differential or integral inequality by the solution of the corresponding 
differential or integral equation, is Gr\"onwall's inequality \cite{MR2016992}. 
It provides useful estimates in ordinary and stochastic differential 
equations \cite{MR4061730}, stability analysis \cite{MR4133248}, 
and fractional difference \cite{MR4062051} and differential equations \cite{MR4020061}.

Along the years, conjugation of integration by parts with Gr\"onwall's inequality has shown
a myriad of interesting results in several different areas, e.g., 
in probability theory and stochastic processes \cite{MR4068313},
systems and control theory \cite{MR3436584}, 
and fractional optimal control \cite{MR3736166}.

Fractional optimal control and the fractional calculus of variations
are concerned with the analysis and derivations of necessary optimality conditions 
for optimization problems involving fractional operators \cite{MR3822307,MR4116679}. 
For smooth and unconstrained data, optimal control problems 
of Lagrange form can be seen as a generalization of the calculus of variations 
\cite{liberzon,MR3331286}. In fact, maximum principles or optimality conditions 
can be obtained from variational analysis approaches. In particular, 
the Pontryagin maximum principle \cite{MR0186436} takes then a special weaker 
form in which the Hamiltonian maximality condition is reduced to a null 
derivative of the Hamitonian function along the extremals \cite{faical}. 
Here we prove integration by parts and a Gr\"onwall's inequality 
for fractional operators with a general analytical kernel in the sense
of Fernandez, \"{O}zarslan and Baleanu \cite{arran},
and apply it in the context of the fractional calculus 
of variations and fractional optimal control. 

Recent advancements within the fractional calculus research community 
has included proposals for general classes of operators, covering many 
of the numerous diverse definitions of fractional integrals and derivatives 
under a single generalised operator \cite{MR4034743}. As already mentioned, 
here we deal with one such advancement: the Fernandez--\"{O}zarslan--Baleanu (FOB) 
fractional calculus of 2019 \cite{arran}. Although recent, the ideas of the FOB
fractional calculus have already find many interesting applications, for example
in the identification of space-dependent source terms in nonlocal problems
\cite{MR4163085}, tempered fractional calculus \cite{MR3995283}, and
on the determination of a source term for fractional Rayleigh--Stokes 
equations with random data \cite{MR4062054}. Here we introduce and
develop a FOB fractional optimal control theory.

The manuscript is organized as follows. In Section~\ref{sec:02},
we briefly review the FOB fractional calculus, recalling necessary
notions and results while fixing notation. Our original results begin
with Section~\ref{sec:03}, where we prove an important duality relation
(Lemma~\ref{lem:dual}), integration by parts (Lemma~\ref{lem:int:parts}),
and Gr\"onwall's inequality (Theorem~\ref{thm:GI}). Follows our main applications:
in Section~\ref{sectionR}, we prove continuity and differentiability 
of solutions to control differential equations (respectively Lemmas~\ref{cont}
and \ref{different}), from which we prove a weak version of Pontryagin's maximum principle
for FOB optimal control problems (Theorem~\ref{theo}). As corollaries,
we deduce Euler--Lagrange necessary optimality conditions for the fundamental
FOB fractional problem of the calculus of variations (Corollary~\ref{cor:EL:fund:CoV})
as well as isoperimetric problems (Corollary~\ref{cor:EL:iso}).

% ------------------------------------------

\section{Preliminaries}
\label{sec:02}

We begin by recalling the general analytic kernel fractional calculus 
of Fernandez, \"{O}zarslan and Baleanu, proposed in 2019 \cite{arran}.

\begin{definition}[See \cite{arran}]
\label{def1}
Let $[a, b]$ be a real interval, $\alpha$ be a real parameter in $[0, 1]$, 
$\beta $ be a complex parameter with non-negative real part, and $R$ be a 
positive number satisfying $R> (b-a)^{Re(\beta)}$. Let $A$ be a complex 
analytic function on the disc $D(0, R)$ and defined on this disc 
by the locally uniformly convergent power series
\[
A(x)= \sum^{\infty}_{n=0} a_n x^n.
\] 
The left and right-sided fractional integrals with general analytic kernel
of a function $x: [a, b]\rightarrow \mathbb{R}$ are defined by 
\[
^{A}I^{\alpha, \beta}_{a+} x(t):= \int^t_a(t-s)^{\alpha-1}A\left( (t-s)^{\beta}\right)x(s)ds 
\]
and 
\[
 ^{A}I^{\alpha, \beta}_{b-} x(t):= \int^b_t(s-t)^{\alpha-1}A\left( (s-t)^{\beta}\right)x(s)ds,
\]
respectively.
\end{definition}

\begin{remark}
Note that the Riemann--Liouville fractional integral 
is a special case of Definition~\ref{def1} given by 
\[
^{RL}I^{\alpha}_{a+}x(t)
= \frac{1}{\Gamma(\alpha)A(1)}{^{A}I^{\alpha, 0}_{a+}x(t)},
\]
for any arbitrary choice of the analytic function $A$.
\end{remark}
 
\begin{notation}
For any analytic function $A$ as in Definition \ref{def1}, 
we denote $A_{\Gamma}$ to be the transformation function
\[
A_{\Gamma}(x):= \sum^{\infty}_{n=0}a_n\Gamma(\beta n + \alpha)x^n.
\]
\end{notation}

\begin{lemma}[Series formula \cite{arran}] 
For any integrable function $x(t)$, $t \in [a, b]$, 
the following uniformly convergent series formulas for $^{A}I^{\alpha, \beta}_{a+} x$
and for $^{A}I^{\alpha, \beta}_{b-} x$, as functions on $[a,b]$, hold:
\[
^{A}I^{\alpha, \beta}_{a+} x(t)
:= \sum^{\infty}_{n=0} a_n\Gamma (\beta n + \alpha)^{RL}I^{\alpha + n\beta}_{a+}x(t)
\]
and 
\[
^{A}I^{\alpha, \beta}_{b-} x(t):= \sum^{\infty}_{n=0} 
a_n\Gamma (\beta n + \alpha)^{RL}I^{\alpha + n\beta}_{b-}x(t),
\]
respectively. Here $^{RL}I^{\alpha + n\beta}_{a+}$ and 
$^{RL}I^{\alpha + n\beta}_{b-}$ are the left and right-sided 
Riemann--Liouville fractional integrals 
of order $\alpha + n\beta $, respectively.
\end{lemma}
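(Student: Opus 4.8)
The plan is to insert the defining power series of $A$ directly into the integral definition and then interchange summation with integration. Starting from
\[
{}^{A}I^{\alpha,\beta}_{a+}x(t)=\int_a^t(t-s)^{\alpha-1}A\bigl((t-s)^\beta\bigr)x(s)\,ds,
\]
I would expand $A\bigl((t-s)^\beta\bigr)=\sum_{n=0}^\infty a_n(t-s)^{\beta n}$. Once the sum can be pulled outside the integral, each term reads
\[
a_n\int_a^t(t-s)^{\alpha+n\beta-1}x(s)\,ds=a_n\Gamma(\alpha+n\beta)\,{}^{RL}I^{\alpha+n\beta}_{a+}x(t),
\]
by the very definition of the Riemann--Liouville fractional integral of order $\alpha+n\beta$, and summing over $n$ produces precisely the asserted formula.

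The heart of the matter is legitimising the interchange. For $s\in[a,t]\subseteq[a,b]$ the argument of $A$ obeys $\bigl|(t-s)^\beta\bigr|=(t-s)^{Re(\beta)}\le(b-a)^{Re(\beta)}<R$, so $(t-s)^\beta$ always lies in a fixed compact subset of the disc $D(0,R)$; this is exactly where the hypothesis $R>(b-a)^{Re(\beta)}$ enters. On such a compact set the power series of $A$ converges uniformly, so, writing $S_N$ for its $N$-th partial sum, I would estimate the tail by
\[
\left|\int_a^t(t-s)^{\alpha-1}\bigl[A\bigl((t-s)^\beta\bigr)-S_N\bigl((t-s)^\beta\bigr)\bigr]x(s)\,ds\right|\le\|A-S_N\|_\infty\int_a^t(t-s)^{\alpha-1}|x(s)|\,ds,
\]
where $\|A-S_N\|_\infty$ is taken over that compact disc and tends to $0$. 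Since $\alpha>0$ makes $(t-s)^{\alpha-1}$ integrable on $[a,t]$ and $x$ is integrable by hypothesis, the remaining integral is finite; hence the tail vanishes, which both validates the term-by-term integration and yields the stated uniform convergence of the resulting series of Riemann--Liouville integrals. The right-sided identity then follows by the identical computation with $(s-t)$ in place of $(t-s)$ and the integration running from $t$ to $b$ (equivalently, via the reflection $s\mapsto a+b-s$).

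The main obstacle I anticipate is entirely contained in this tail estimate: one must ensure that the integrable singularity $(t-s)^{\alpha-1}$ near $s=t$ does not spoil the uniformity of the convergence. This is handled cleanly because the singular weight is controlled independently of the summation index $n$, so the uniform bound $\|A-S_N\|_\infty$ on the analytic factor---a Weierstrass $M$-test in disguise---suffices, and no regularity on $x$ beyond integrability is required.
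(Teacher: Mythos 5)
The paper itself does not prove this lemma: it is recalled, statement only, from the reference \cite{arran} in the Preliminaries, so there is no in-paper proof to match. Your argument is exactly the standard one behind the cited result: expand $A\bigl((t-s)^{\beta}\bigr)=\sum_n a_n(t-s)^{\beta n}$, note that for $a\le s\le t\le b$ one has $\bigl|(t-s)^{\beta}\bigr|=(t-s)^{Re(\beta)}\le (b-a)^{Re(\beta)}<R$, so the argument of $A$ stays in a fixed compact subdisc of $D(0,R)$ (this is precisely what the hypothesis $R>(b-a)^{Re(\beta)}$ is for), integrate term by term, and recognise each term as $a_n\Gamma(\alpha+n\beta)\,{}^{RL}I^{\alpha+n\beta}_{a+}x(t)$. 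That identification and the mirrored right-sided computation are correct.

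One step, however, does not hold as written: you justify finiteness of the tail majorant $\int_a^t(t-s)^{\alpha-1}|x(s)|\,ds$ by saying that $(t-s)^{\alpha-1}$ is integrable and $x$ is integrable --- but a product of two integrable functions need not be integrable. This quantity is (up to a constant) ${}^{RL}I^{\alpha}_{a+}|x|(t)$, and for merely $L^1$ data it can be infinite at exceptional points: take $x(s)=(c-s)^{-\gamma}\mathbf{1}_{s<c}$ with $\alpha\le\gamma<1$, for which the integral diverges at $t=c$. Fubini gives finiteness only for almost every $t$, so your estimate actually yields convergence of the series in $L^1[a,b]$ and pointwise a.e.; genuinely uniform convergence on all of $[a,b]$ follows from your bound only when $\sup_{t}\int_a^t(t-s)^{\alpha-1}|x(s)|\,ds<\infty$, e.g.\ for bounded (in particular continuous) $x$, where it is at most $\|x\|_\infty(b-a)^{\alpha}/\alpha$. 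To be fair, this imprecision is inherited from the lemma's own wording (``uniformly convergent \ldots for any integrable $x$''), so it is as much a caveat on the statement as on your proof; the mechanism of your argument --- uniform control of the analytic factor on a fixed compact disc, with the singular weight handled independently of $n$ --- is the right one, but you should either strengthen the hypothesis on $x$ or weaken the asserted mode of convergence.
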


Let $a, b, \alpha, \beta$ and $A$ be as in Definition~\ref{def1}.

\begin{definition}
The left and right Rieman--Liouville fractional derivatives 
with general analytic kernels of a function $x: [a, b] \rightarrow \mathbb{R}$ 
with sufficient differentiability properties are defined by
\[
^{A}_{RL}D^{\alpha, \beta}_{a+} x(t)
= \frac{d}{dt}\Big( ^{\bar{A}}I^{1-\alpha, \beta}_{a+}x(t)\Big) 
\quad \text{ and } 
\quad ^{A}_{RL}D^{\alpha, \beta}_{b-} x(t)
= -\frac{d}{dt}\Big( ^{\bar{A}}I^{1-\alpha, \beta}_{b-}x(t)\Big),
\]
where the function $\bar{A}$ used on the right-hand side is an analytic function 
defined by $\bar{A}(x)=\sum_{n=0}^{\infty}\bar{a}_n x^n$ 
and such that $A_{\Gamma}\cdot \bar{A}_{\Gamma}=1$.
\end{definition}

\begin{definition}
The left and right Caputo fractional derivatives with general analytic kernels 
of a function $x: [a, b] \rightarrow \mathbb{R}$ with sufficient 
differentiability properties are defined by
\[
^{A}_{C}D^{\alpha, \beta}_{a+} x(t)= ^{\bar{A}}I^{1-\alpha, \beta}_{a+}x'(t) 
\quad \text{ and } \quad ^{A}_{C}D^{\alpha, \beta}_{b-} x(t)
= -^{\bar{A}}I^{1-\alpha, \beta}_{b-}x'(t),
\]
where the function $\bar{A}$ used on the right-hand side is an analytic 
function defined by $\bar{A}(x)=\sum_{n=0}^{\infty}\bar{a}_n x^n$ 
and such that $A_{\Gamma}\cdot \bar{A}_{\Gamma}=1$.
\end{definition}

\begin{remark}
Note that the classical integer order derivative is obtained, up to a multiplicative constant, 
when $\alpha = 1$ and $\beta =0$.
\end{remark}

\begin{theorem}[Semi group property \cite{arran}]
\label{semigroup} 
Let $a, b, A$ be as in Definition~\ref{def1}, and fix 
$\alpha_1, \alpha_2, \beta \in \mathbb{C}$ with non-negative real parts. 
The semigroup property 
\[
{^{A}I^{\alpha_1, \beta}_{a+}} \circ {^{A}I^{\alpha_2, \beta}_{a+}} x(t) 
= {^{A}I^{\alpha_1 + \alpha_2, \beta}_{a+}} x(t)
\]
is uniformly valid (regardless of $\alpha_1, \alpha_2, \beta $ and $f$) 
if, and only if, the following condition is satisfied for all non-negative integers $k$:
\[
\sum_{m+n=k}a_n(\alpha_1, \beta)a_m(\alpha_2, \beta)
\Gamma(\alpha_1 + n\beta)\Gamma(\alpha_2 + n\beta)
= a_k(\alpha_1 + \alpha_2, \beta)\Gamma(\alpha_1 + \alpha_2+ k\beta).
\]
\end{theorem}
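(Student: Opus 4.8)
The plan is to reduce the composition ${^{A}I^{\alpha_1,\beta}_{a+}}\circ{^{A}I^{\alpha_2,\beta}_{a+}}$ to a double series of compositions of Riemann--Liouville integrals, collapse each such composition with the classical RL semigroup law, and then match the resulting series against the series formula for the single operator of order $\alpha_1+\alpha_2$. The equivalence in the statement will emerge as precisely the condition under which these two series carry identical coefficients.

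First I would expand the inner operator with the series formula, ${^{A}I^{\alpha_2,\beta}_{a+}}x=\sum_{m=0}^{\infty}a_m(\alpha_2,\beta)\Gamma(\alpha_2+m\beta)\,{^{RL}I^{\alpha_2+m\beta}_{a+}}x$, and then apply ${^{A}I^{\alpha_1,\beta}_{a+}}$ to it, expanding once more. Using linearity of the RL integral and interchanging the (uniformly convergent) summation with the integral operator, the left-hand side becomes $\sum_{n,m}a_n(\alpha_1,\beta)a_m(\alpha_2,\beta)\Gamma(\alpha_1+n\beta)\Gamma(\alpha_2+m\beta)\,{^{RL}I^{\alpha_1+n\beta}_{a+}}\circ{^{RL}I^{\alpha_2+m\beta}_{a+}}x$. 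I would then invoke the classical Riemann--Liouville semigroup identity ${^{RL}I^{\mu}_{a+}}\circ{^{RL}I^{\nu}_{a+}}={^{RL}I^{\mu+\nu}_{a+}}$, valid for orders with positive real part, to collapse each composition to ${^{RL}I^{\alpha_1+\alpha_2+(n+m)\beta}_{a+}}x$.

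Reindexing by $k=m+n$ then groups the terms into $\sum_{k=0}^{\infty}\left(\sum_{m+n=k}a_n(\alpha_1,\beta)a_m(\alpha_2,\beta)\Gamma(\alpha_1+n\beta)\Gamma(\alpha_2+m\beta)\right){^{RL}I^{\alpha_1+\alpha_2+k\beta}_{a+}}x$, whereas the series formula applied directly to the right-hand side gives $\sum_{k=0}^{\infty}a_k(\alpha_1+\alpha_2,\beta)\Gamma(\alpha_1+\alpha_2+k\beta)\,{^{RL}I^{\alpha_1+\alpha_2+k\beta}_{a+}}x$. Equality of these two series for every admissible $x$ is equivalent to termwise equality of the bracketed coefficients, which is exactly the condition displayed in the theorem. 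The \emph{if} direction is then immediate, while for \emph{only if} one must argue that the family ${^{RL}I^{\alpha_1+\alpha_2+k\beta}_{a+}}x$ is independent enough that a vanishing series forces all coefficients to vanish; testing against $x\equiv1$, where ${^{RL}I^{\gamma}_{a+}}1=(t-a)^{\gamma}/\Gamma(\gamma+1)$, reduces this to the linear independence of the distinct powers $(t-a)^{\alpha_1+\alpha_2+k\beta}$ for $\mathrm{Re}(\beta)>0$.

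I expect the main obstacle to be the analytic bookkeeping rather than any conceptual difficulty: rigorously justifying the interchange of the infinite summation with the integral operator and the reindexing of the double series (a Fubini/dominated-convergence argument resting on the uniform convergence guaranteed by the series formula), and then carrying out the independence step cleanly, including the degenerate boundary case $\beta=0$ where the exponents $\alpha_1+\alpha_2+k\beta$ collapse and the separation of coefficients must be argued by a limiting or continuity argument.
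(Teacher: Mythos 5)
The paper does not actually prove this theorem: it is recalled verbatim from the cited source \cite{arran} as a preliminary result, so there is no internal proof to compare against. Your argument --- expand both operators via the series formula, interchange the locally uniformly convergent sums with the integrals, collapse each composition ${^{RL}I^{\alpha_1+n\beta}_{a+}}\circ{^{RL}I^{\alpha_2+m\beta}_{a+}}$ by the classical Riemann--Liouville semigroup law, reindex as a Cauchy product over $k=m+n$, and match coefficients against the series for ${^{A}I^{\alpha_1+\alpha_2,\beta}_{a+}}$, with the ``only if'' direction settled by testing on $x\equiv 1$ and the independence of the powers $(t-a)^{\alpha_1+\alpha_2+k\beta}$ --- is precisely the strategy of the original proof in \cite{arran}, so your reconstruction is correct and follows essentially the canonical route. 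Two remarks worth recording: your derivation produces $\Gamma(\alpha_2+m\beta)$ in the convolution condition, which silently corrects a typo in the paper's display (where the second Gamma factor is misprinted as $\Gamma(\alpha_2+n\beta)$), and your caution about the degenerate case $\beta=0$ is warranted, since there the exponents collapse and a single choice of $\beta$ only forces the summed, not the termwise, identity --- the uniformity of the hypothesis over all $\beta$ with positive real part is what rescues the coefficient-by-coefficient conclusion.
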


% ------------------------------------------

\section{Fundamental Properties}
\label{sec:03}

We begin by proving some technical but important results.

\begin{lemma}[Duality operation] 
\label{lem:dual}
For any functions $x(t)$ and $y(t)$, $t \in [a, b]$, the following duality relation holds:
\[
\int^b_a x(t)^{A}I^{\alpha, \beta}_{a+} y(t) dt 
= \int^b_a y(t)^{A}I^{\alpha, \beta}_{b-} x(t)dt.
\]
\end{lemma}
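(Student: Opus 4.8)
The plan is to establish the duality relation by substituting the integral definitions of the two fractional operators and interchanging the order of integration via Fubini's theorem. First I would write out the left-hand side explicitly using Definition~\ref{def1}:
\[
\int^b_a x(t)\,{^{A}I^{\alpha, \beta}_{a+}} y(t)\, dt
= \int^b_a x(t) \left( \int^t_a (t-s)^{\alpha-1} A\bigl((t-s)^{\beta}\bigr) y(s)\, ds \right) dt.
\]
This is a double integral over the triangular region $\{(s,t) : a \le s \le t \le b\}$.

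The key step is to reverse the order of integration. The region $a \le s \le t \le b$ can equally be described as $a \le s \le b$ with $s \le t \le b$, so after swapping I would obtain
\[
\int^b_a y(s) \left( \int^b_s (t-s)^{\alpha-1} A\bigl((t-s)^{\beta}\bigr) x(t)\, dt \right) ds.
\]
Now I recognize the inner integral: with the roles of the variables matching the right-sided operator, $\int^b_s (t-s)^{\alpha-1} A\bigl((t-s)^{\beta}\bigr) x(t)\, dt$ is precisely ${^{A}I^{\alpha, \beta}_{b-}} x(s)$, since the kernel $(t-s)^{\alpha-1}A((t-s)^{\beta})$ depends only on the difference $t-s$ and the integration runs from $s$ to $b$. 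Relabeling the outer integration variable from $s$ to $t$ then yields the desired right-hand side.

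The main obstacle, and the point requiring care, is justifying the application of Fubini's theorem, since the kernel has a singularity along the diagonal $s = t$ when $\alpha < 1$ (the factor $(t-s)^{\alpha-1}$ blows up). To handle this rigorously I would invoke the uniformly convergent series formula from the preceding lemma, which expresses ${^{A}I^{\alpha, \beta}_{a+}}$ as a series of Riemann--Liouville integrals ${^{RL}I^{\alpha+n\beta}_{a+}}$; the duality relation is classical and well known for each Riemann--Liouville term, and uniform convergence lets me interchange summation and integration to pass the identity to the general analytic kernel. Alternatively, since the statement is phrased for generic functions $x$ and $y$ without integrability hypotheses, one may treat the computation formally, assuming enough regularity and integrability that Fubini applies; in either approach the substance of the proof is the single change in the order of integration, and the rest is bookkeeping of the integration limits.
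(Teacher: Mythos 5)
Your proof is correct, but it takes a genuinely different route from the paper. The paper never touches the double integral directly: it invokes the series formula to expand ${^{A}I^{\alpha,\beta}_{a+}}y$ as the uniformly convergent series $\sum_{n} a_n\Gamma(\beta n+\alpha)\,{^{RL}I^{\alpha+n\beta}_{a+}}y$, interchanges summation and integration by uniform convergence, applies the classical Riemann--Liouville duality to each term, and resums. Your primary argument---writing the left-hand side as a double integral over the triangle $\{a\le s\le t\le b\}$, swapping the order of integration, and recognizing the inner integral as ${^{A}I^{\alpha,\beta}_{b-}}x(s)$ because the kernel depends only on $t-s$---is what you correctly identify as the substance, and it is in fact the same computation that underlies the classical RL duality the paper cites; in that sense your argument is more self-contained, since the paper merely relocates the Fubini step into the cited RL result. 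Your direct approach also makes the analytic issues explicit: $A\bigl((t-s)^{\beta}\bigr)$ is bounded on the triangle because $R>(b-a)^{\mathrm{Re}(\beta)}$ keeps $(t-s)^{\beta}$ inside the disc of analyticity, so the kernel's only singularity is the integrable factor $(t-s)^{\alpha-1}$, and Fubini applies under mild hypotheses on $x$ and $y$ (e.g., continuity, which is all the paper uses later); the paper's looseness about ``any functions'' is shared by both proofs, and you flag it honestly. What the paper's route buys instead is consistency with its overall series-based machinery (the same device reappears in the semigroup property and Gr\"onwall arguments) and reduction to a textbook fact, at the cost of needing the term-by-term interchange justified by uniform convergence; your secondary remark sketching exactly that series argument shows you recovered the paper's proof as a fallback, so nothing is missing.
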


\begin{proof}
By the series formula, we have that
\begin{equation}
\label{series}
\int^b_a x(t)^{A}I^{\alpha, \beta}_{a+} y(t) dt 
= \int^b_a x(t)\sum^{\infty}_{n=0}a_n 
\Gamma(\beta n + \alpha)^{RL}I^{\alpha + n\beta}_{a+} y(t)dt.
\end{equation}
Since the series in the right-hand side of \eqref{series} 
is uniformly convergent, it follows that
\[
\int^b_a x(t)^{A}I^{\alpha, \beta}_{a+} y(t) dt 
= \sum^{\infty}_{n=0}a_n \Gamma(\beta n + \alpha) 
\int^b_a x(t) ^{RL}I^{\alpha + n\beta}_{a+} y(t)dt
\]
and, by duality of Riemann--Liouville integral operators, one has
\[
\int^b_a x(t) ^{RL}I^{\alpha + n\beta}_{a+} y(t)dt 
=  \int^b_a y(t) ^{RL}I^{\alpha + n\beta}_{b-} x(t)dt
\]
for any $n \in \mathbb{N}$, which leads to 
\[
\sum^{\infty}_{n=0}a_n \Gamma(\beta n + \alpha) 
\int^b_a x(t) ^{RL}I^{\alpha + n\beta}_{a+} y(t)dt 
= \sum^{\infty}_{n=0}a_n \Gamma(\beta n + \alpha)  
\int^b_a y(t) ^{RL}I^{\alpha + n\beta}_{b-} x(t)dt.
\]
Therefore, we obtain that
\[
\int^b_a x(t)^{A}I^{\alpha, \beta}_{a+} y(t) dt 
= \int^b_a y(t)^{A}I^{\alpha, \beta}_{b-} x(t)dt.
\]
The proof is complete.
\end{proof}

\begin{lemma}[Integration by parts formula]
\label{lem:int:parts}
Let $x$ be a continuous function and $y$ a continuously differentiable function. Then, 
\[
\int^b_a x(t) ^{A}_{C}D^{\alpha, \beta}_{a+} y(t)dt 
= \left[y(t)^{\bar{A}}I^{1-\alpha, \beta}_{b-}\right]^b_a 
+ \int^b_a y(t) ^{A}_{RL}D^{\alpha, \beta}_{b-} x(t)dt. 
\]
\end{lemma}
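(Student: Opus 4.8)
The plan is to reduce this fractional integration by parts to the classical one by first unfolding the Caputo derivative and then invoking the duality relation of Lemma~\ref{lem:dual}. First I would rewrite the left-hand side using the definition of the Caputo derivative, ${^{A}_{C}D^{\alpha,\beta}_{a+}} y = {^{\bar{A}}I^{1-\alpha,\beta}_{a+}} y'$, so that
\[
\int_a^b x(t)\, {^{A}_{C}D^{\alpha,\beta}_{a+}} y(t)\,dt = \int_a^b x(t)\, {^{\bar{A}}I^{1-\alpha,\beta}_{a+}} y'(t)\,dt.
\]
This is now an integral of exactly the form covered by the duality relation, with analytic kernel $\bar{A}$, order $1-\alpha$, and the two functions being $x$ and $y'$.

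Next I would apply Lemma~\ref{lem:dual} directly to transfer the left-sided operator onto $x$ as a right-sided one, obtaining
\[
\int_a^b x(t)\, {^{\bar{A}}I^{1-\alpha,\beta}_{a+}} y'(t)\,dt = \int_a^b y'(t)\, {^{\bar{A}}I^{1-\alpha,\beta}_{b-}} x(t)\,dt.
\]
The right-hand side is now a genuinely classical expression in which $y'$ appears as an honest derivative, so the third step is an ordinary integration by parts, taking $y'(t)\,dt$ as the differential factor and ${^{\bar{A}}I^{1-\alpha,\beta}_{b-}} x(t)$ as the factor to be differentiated:
\[
\int_a^b y'(t)\, {^{\bar{A}}I^{1-\alpha,\beta}_{b-}} x(t)\,dt = \Big[ y(t)\, {^{\bar{A}}I^{1-\alpha,\beta}_{b-}} x(t)\Big]_a^b - \int_a^b y(t)\, \frac{d}{dt}\Big( {^{\bar{A}}I^{1-\alpha,\beta}_{b-}} x(t)\Big)\,dt.
\]

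Finally I would recognize, from the definition of the right Riemann--Liouville derivative, that $-\frac{d}{dt}\big({^{\bar{A}}I^{1-\alpha,\beta}_{b-}} x\big) = {^{A}_{RL}D^{\alpha,\beta}_{b-}} x$, which converts the last integral into the desired $\int_a^b y(t)\, {^{A}_{RL}D^{\alpha,\beta}_{b-}} x(t)\,dt$ and yields the stated identity. The step I expect to require the most care is the classical integration by parts: it presupposes that $t \mapsto {^{\bar{A}}I^{1-\alpha,\beta}_{b-}} x(t)$ is differentiable (indeed absolutely continuous) on $[a,b]$, which is precisely where the hypothesis that $x$ is continuous, together with the convergence of the defining series of the fractional integral, is needed to guarantee that the right Riemann--Liouville derivative of $x$ exists and that the boundary term is well defined. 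The assumption that $y$ is continuously differentiable ensures both that $y'$ is integrable, so that the duality step applies, and that $y(t)\,{^{\bar{A}}I^{1-\alpha,\beta}_{b-}} x(t)$ supplies the correct boundary contribution.
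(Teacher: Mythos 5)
Your proposal is correct and follows exactly the paper's own argument: unfold the Caputo derivative via its definition, apply the duality relation of Lemma~\ref{lem:dual} with kernel $\bar{A}$ and order $1-\alpha$, perform classical integration by parts, and identify $-\frac{d}{dt}\bigl({^{\bar{A}}I^{1-\alpha,\beta}_{b-}}x\bigr)$ as ${^{A}_{RL}D^{\alpha,\beta}_{b-}}x$. Your closing remark on the differentiability of $t\mapsto {^{\bar{A}}I^{1-\alpha,\beta}_{b-}}x(t)$ is a point the paper leaves implicit, and it is a welcome addition.
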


\begin{proof}
By definition, 
\[
\int^b_a x(t) ^{A}_{C}D^{\alpha, \beta}_{a+} y(t)dt 
= \int^b_a x(t) ^{\bar{A}}I^{1-\alpha, \beta}_{a+}y'(t)dt
\]
and, by the duality formula, it follows that 
\[
\int^b_a x(t) ^{\bar{A}}I^{1-\alpha, \beta}_{a+}y'(t)dt 
= \int^b_a y'(t) ^{\bar{A}}I^{1-\alpha, \beta}_{b-}x(t)dt.
\]
Using (standard) integration by parts, we obtain that
\[
\int^b_a y'(t) ^{\bar{A}}I^{1-\alpha, \beta}_{b-}x(t)dt 
= \left[y(t)^{\bar{A}}I^{1-\alpha, \beta}_{b-}\right]^b_a 
- \int^b_a y(t)\frac{d}{dt}\Big( ^{\bar{A}}I^{1-\alpha, \beta}_{b-}x(t)\Big) dt,
\]
which leads to the desired formula.
\end{proof}

\begin{theorem}[Gr\"onwall's inequality]
\label{thm:GI}
Let $\alpha$ be a positive real number and let $a(\cdot), \, g(\cdot)$ and $u(\cdot)$ 
be non-negative continuous functions on $[0, T]$ with $g(\cdot)$ monotonic increasing, 
satisfying $\displaystyle{\underset{t \in [0, T]}\max g(t) < \frac{1}{T^{\alpha}}}$. If 
\begin{equation}
\label{inequality}
u(t)\leqslant a(t)+ g(t)\left({^{A}I^{\alpha, \beta}_{0^{+}}}u\right)(t), 
\end{equation}
then 
\[
u(t) \leqslant a(t)+ \sum_{k=1}^{\infty}g^{k}(t)\Big(  ^{A}I^{k\alpha, \beta}_{0^{+}}a\Big)(t) 
\] 
for any $t\in [0, T]$.
\end{theorem}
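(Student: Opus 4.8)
The plan is to run the classical iteration scheme for Gr\"onwall-type inequalities, adapted to the FOB integral. Introduce the operator $(Bv)(t) := g(t)\,{}^{A}I^{\alpha,\beta}_{0^+}v(t)$, so that hypothesis \eqref{inequality} reads $u \le a + Bu$. Since the kernel $(t-s)^{\alpha-1}A\big((t-s)^{\beta}\big)$ of ${}^{A}I^{\alpha,\beta}_{0^+}$ is non-negative on the relevant range and $g\ge 0$, the operator $B$ is order-preserving on non-negative functions. Applying $B$ repeatedly to \eqref{inequality} and using this monotonicity, I would obtain the telescoped bound
\[
u(t) \le \sum_{k=0}^{n-1}(B^{k}a)(t) + (B^{n}u)(t), \qquad n\ge 1,
\]
where $B^{0}a := a$.

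The heart of the argument is the pointwise estimate
\[
(B^{k}v)(t) \le g^{k}(t)\,{}^{A}I^{k\alpha,\beta}_{0^+}v(t)
\]
valid for every non-negative continuous $v$, which I would establish by induction on $k$. In the induction step one writes $(B^{k+1}v)(t) = g(t)\,{}^{A}I^{\alpha,\beta}_{0^+}\big[g^{k}\,{}^{A}I^{k\alpha,\beta}_{0^+}v\big](t)$, then uses the monotonicity of $g$ to replace $g^{k}(s)$ by the larger value $g^{k}(t)$ under the integral sign (legitimate since $s\le t$), pulling $g^{k}(t)$ out; what remains is ${}^{A}I^{\alpha,\beta}_{0^+}\circ{}^{A}I^{k\alpha,\beta}_{0^+}v = {}^{A}I^{(k+1)\alpha,\beta}_{0^+}v$ by the semigroup property (Theorem~\ref{semigroup}). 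This is precisely where both hypotheses are consumed, the monotonicity of $g$ and the semigroup condition on $A$, and it is the step I expect to require the most care.

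It then remains to control the two limits as $n\to\infty$. For the remainder, the displayed estimate with $v=u$ gives $(B^{n}u)(t)\le g^{n}(t)\,{}^{A}I^{n\alpha,\beta}_{0^+}u(t)$; bounding $A$ by a constant $C_{A}:=\sup_{|x|\le T^{\mathrm{Re}(\beta)}}|A(x)|<\infty$ on the disc, $u$ by $M:=\max_{[0,T]}u$, and $g$ by $G:=\max_{[0,T]}g$, the elementary computation of $\int_{0}^{t}(t-s)^{n\alpha-1}\,ds$ yields
\[
(B^{n}u)(t) \le \frac{C_{A}M}{n\alpha}\,(G\,T^{\alpha})^{n}.
\]
The hypothesis $\max_{[0,T]}g < T^{-\alpha}$ means exactly $G\,T^{\alpha}<1$, so the geometric factor forces $(B^{n}u)(t)\to 0$. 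The identical bound with $a$ in place of $u$ shows that $\sum_{k=1}^{\infty}g^{k}(t)\,{}^{A}I^{k\alpha,\beta}_{0^+}a(t)$ converges, whence passing to the limit in the telescoped inequality gives
\[
u(t) \le a(t) + \sum_{k=1}^{\infty}g^{k}(t)\,{}^{A}I^{k\alpha,\beta}_{0^+}a(t),
\]
as claimed. Beyond the induction, the only delicate points are ensuring the kernel positivity needed for the monotonicity of $B$ and justifying the interchange of summation and integration implicit in manipulating the FOB operators, both of which follow from the uniform convergence of the series formula.
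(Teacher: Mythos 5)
Your proposal follows essentially the same route as the paper's own proof: iterate the inequality using the order-preservation of the FOB integral and the monotonicity of $g$ (pulling $g^{k}(t)$ out of the integral), compose the integrals via the semigroup property of Theorem~\ref{semigroup}, and control the tail by a geometric series using $\max_{[0,T]} g < T^{-\alpha}$. If anything, your remainder estimate is more careful than the paper's: by integrating the kernel exactly, $\int_0^t (t-s)^{n\alpha-1}\,ds = t^{n\alpha}/(n\alpha)$, you avoid the paper's pointwise bound $(t-s)^{k\alpha-1}\leqslant T^{k\alpha-1}$, which fails near $s=t$ whenever $k\alpha<1$.
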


\begin{proof}
Because $^{A}I^{\alpha, \beta}_{0^{+}}$ is a non-decreasing operator, we have
\[
\Big(^{A}I^{\alpha, \beta}_{0^{+}}u\Big)(t)
\leqslant {^{A}I^{\alpha, \beta}_{0^{+}}}\Big(a(\cdot) 
+ g(t)\Big(^{A}I^{\alpha, \beta}_{0^{+}}u\Big)\Big)(t)
= \Big(^{A}I^{\alpha, \beta}_{0^{+}}a\Big)(t) 
+ g(t)\Big(^{A}I^{\alpha, \beta}_{0^{+}}u\Big)(t).
\]
Now, using its semi-group property \eqref{semigroup}, we can substitute
the previous inequality into \eqref{inequality}, to obtain 
\[
u(t) \leqslant a(t) + g(t)\Big(^{A}I^{\alpha, \beta}_{0^{+}}a\Big)(t) 
+ g^2(t)\Big(^{A}I^{2\alpha, \beta}_{0^{+}}u\Big)(t).
\]
Repeating this procedure up to $N$ times, we get
\[
u(t) \leqslant a(t) + \sum_{k=1}^{N-1}g^k(t)\Big(
^{A}I^{k\alpha, \beta}_{0^{+}}a\Big)(t) 
+ g^N(t)\Big(^{A}I^{N\alpha, \beta}_{0^{+}}u\Big)(t).
\]
Therefore, when $N\rightarrow \infty$, we have
\[
u(t) \leqslant a(t) + \sum_{k=1}^{\infty}g^k(t)\Big(^{A}
I^{k\alpha, \beta}_{0^{+}}a\Big)(t) 
+ \lim_{N\rightarrow \infty}g^N(t)\Big(^{A}I^{N\alpha, \beta}_{0^{+}}u\Big)(t).
\] 
It remains to show that the series 
\[
\sum_{k=1}^{\infty}g^k(t)\Big(^{A}I^{k\alpha, \beta}_{0^{+}}a\Big)(t)
\]
is convergent and $\underset{{N\rightarrow \infty}}
\lim g^N(t)\Big(^{A}I^{N\alpha, \beta}_{0^{+}}u\Big)(t)=0$, 
to obtain the desired result. Using Definition~\ref{def1} 
of the operator $^{A}I^{k\alpha, \beta}_{0^{+}}$, one has
\[
\sum_{k=1}^{\infty}g^k(t)\Big(^{A}I^{k\alpha, \beta}_{0^{+}}a\Big)(t)
= \sum_{k=1}^{\infty}g^k(t)\int^t_0(t-s)^{k\alpha -1}A\Big((t-s)^{\beta}\Big) a(s)ds.
\]
Next,
\begin{align*}
\left| \sum_{k=1}^{\infty}g^k(t)\Big(^{A}I^{k\alpha, \beta}_{0^{+}}a\Big)(t) \right| 
&\leqslant \mu \sum_{k=1}^{\infty}\left| g^k(t)T^{k\alpha -1} \right|\int^t_0|a(s)|ds\\
& = \mu \sum_{k=1}^{\infty}\left| g^k(t)T^{k\alpha -1} \right|\int^t_0|a(s)|ds\\
& \leqslant \mu \sum_{k=1}^{\infty}\left| M^kT^{k\alpha -1} \right|\int^t_0|a(s)|ds, 
\end{align*}
where $\mu= \underset{|x|< T^{\beta}}\sup A(x)$ and 
$M= \displaystyle{\underset{t \in [0, T]}\max g(t)}$. Hence, we obtain that
\[
\left| \sum_{k=1}^{\infty}g^k(t)\Big(^{A}I^{k\alpha, \beta}_{0^{+}}a\Big)(t) \right| 
\leqslant \mu  T^{-1} 
\sum_{k=1}^{\infty}\left|\left( MT^{\alpha} \right)^{k} \right|\int^t_0|a(s)|ds. 
\]
The series converges providing that $\displaystyle{|M|< \frac{1}{T^{\alpha}}}$. 
Moreover, according to the necessary condition of convergence of an infinite series, 
one has 
$$
\underset{{k\rightarrow \infty}}\lim g^k(t)^{A}I^{N\alpha, \beta}_{0^{+}}=0
$$ 
for all $t\in [0, T]$, which leads to
$\underset{{N\rightarrow \infty}}\lim g^N(t)\Big(^{A}I^{N\alpha, \beta}_{0^{+}}u\Big)(t)=0$. 
\end{proof}

% ------------------------------------------

\section{Main Results}
\label{sectionR}

Here we consider a basic optimal control problem, 
which consists to find a piecewise continuous control 
function $u(\cdot) \in PC\left( [a, b]; \mathbb{R}^m\right)$ 
and its corresponding state trajectory $x(\cdot) \in PC^{1}\left( [a, b]; \mathbb{R}^n\right)$, 
solution to problem
\begin{equation}
\label{bp}
\begin{gathered}
\mathcal{J}[x(\cdot), u(\cdot)]
=  \frac{1}{\Gamma(\alpha)A(1)}{^{\bar{A}}
I^{\alpha, \beta}_{a+}\left[L\left(\cdot, x(\cdot), u(\cdot) \right) \right](b)}\longrightarrow \max,\\
^{A}_{C}D^{\alpha, \beta}_{{a}^{+}}x(t)= f\left(t, x(t), u(t)\right), \quad t\in [a, b],\\
x(\cdot) \in PC^{1}, \quad u(\cdot) \in PC,\\
x(a)= x_a \in \mathbb{R}^n,
\end{gathered}
\end{equation}
where, for given $(n, m)\in \mathbb{N}^2$ such that $m\leqslant n $, 
functions $L: [a, b]\times \mathbb{R}^n \times \mathbb{R}^m \rightarrow \mathbb{R}$ 
and $f : [a, b]\times \mathbb{R}^n \times \mathbb{R}^m \rightarrow \mathbb{R}^n$ 
are assumed to be continuously differentiable in all their three arguments: 
$L \in C^1 \left([a, b]\times \mathbb{R}^n \times \mathbb{R}^m ; \mathbb{R}\right)$ 
and $f\in C^1 \left([a, b]\times \mathbb{R}^n \times \mathbb{R}^n ;\mathbb{R}\right)$.

\begin{notation}
We set $w(\cdot):=\displaystyle{\frac{(b-\cdot)^{\alpha-1}
A\left( (b-\cdot)^{\beta} \right)}{\Gamma(\alpha)A(1)}}$, so that
\[
\mathcal{J}[x(\cdot), u(\cdot)]= \int^b_a w(t)L\left(t, x(t), u(t) \right)dt.
\]
\end{notation}

% ------------------------------------------

\subsection{Continuity of solutions of control differential equations}

Now, we consider the following control differential equation:
\begin{equation}
\label{eqcontrol}
^{A}_{C}D^{\alpha, \beta}_{{a}^{+}}x(t)
= f\left(t, x(t), u(t)\right), \quad x(a)= x_a, 
\quad t\in [a, b],
\end{equation}
where $x(\cdot) \in PC^{1}\left( [a, b]; \mathbb{R}^n\right)$ 
represents the state trajectory of \eqref{eqcontrol}, 
$u(\cdot) \in PC\left( [a, b]; \mathbb{R}^m\right)$ is the control input, 
and function $f$ is assumed to be Lipschitz-continuous 
with respect to both $x$ and $u$.

\begin{lemma}
\label{cont}
Let us denote by $u^{*}$ a precise control input to \eqref{eqcontrol}, 
and $x^{*}$ its associated state trajectory. Suppose that $u^{\epsilon}$ 
is a control perturbation around the control input $u^{*}$, that is, 
for all $t\in [a,b]$, $u^{\epsilon}(t)= u^{*}(t)+ \epsilon h(t)$, 
where $h(\cdot)\in PC\left( [a, b]; \mathbb{R}^m\right)$ 
is a variation function and $\epsilon \in \mathbb{R}$. Denote 
by $x^{\epsilon}$ its corresponding state trajectory, solution of 
\[
^{A}_{C}D^{\alpha, \beta}_{{a}^{+}}x^{\epsilon}(t)
= f\left(t, x^{\epsilon}(t), u^{\epsilon}(t)\right), 
\quad x^{\epsilon}(a)= x_a.
\]
Then, we have that $x^{\epsilon}$ converges 
to $x^{*}$ when $\epsilon $ tends to zero.
\end{lemma}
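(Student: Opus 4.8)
The plan is to recast the fractional initial value problem as an equivalent integral equation and then control the state deviation with the Gr\"onwall inequality of Theorem~\ref{thm:GI}. First I would invert the Caputo operator: applying $^{A}I^{\alpha,\beta}_{a+}$ to both sides of the dynamics and using the inversion relation between $^{A}I^{\alpha,\beta}_{a+}$ and $^{A}_{C}D^{\alpha,\beta}_{a+}$ (which is exactly what the normalization $A_{\Gamma}\cdot\bar{A}_{\Gamma}=1$ together with the semigroup property of Theorem~\ref{semigroup} guarantees), the problem $^{A}_{C}D^{\alpha,\beta}_{a+}x=f(\cdot,x,u)$, $x(a)=x_a$, becomes
\[
x(t)=x_a+{^{A}I^{\alpha,\beta}_{a+}}\big[f(\cdot,x(\cdot),u(\cdot))\big](t).
\]
Writing this representation for both $(x^{*},u^{*})$ and $(x^{\epsilon},u^{\epsilon})$ and subtracting gives
\[
x^{\epsilon}(t)-x^{*}(t)={^{A}I^{\alpha,\beta}_{a+}}\big[f(\cdot,x^{\epsilon},u^{\epsilon})-f(\cdot,x^{*},u^{*})\big](t).
\]

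Next I would estimate the integrand by Lipschitz continuity of $f$. Denoting by $L_x,L_u$ the Lipschitz constants in the state and control variables and recalling that $u^{\epsilon}-u^{*}=\epsilon h$, one has pointwise
\[
\big|f(s,x^{\epsilon}(s),u^{\epsilon}(s))-f(s,x^{*}(s),u^{*}(s))\big|
\leqslant L_x\,|x^{\epsilon}(s)-x^{*}(s)|+L_u\,|\epsilon|\,|h(s)|.
\]
Since $^{A}I^{\alpha,\beta}_{a+}$ is a non-decreasing operator (its kernel being non-negative), taking absolute values and applying the operator yields, with $v(t):=|x^{\epsilon}(t)-x^{*}(t)|$,
\[
v(t)\leqslant {^{A}I^{\alpha,\beta}_{a+}}\big[L_u|\epsilon|\,|h|\big](t)+L_x\,\big({^{A}I^{\alpha,\beta}_{a+}}v\big)(t).
\]
This is precisely the hypothesis \eqref{inequality} of Theorem~\ref{thm:GI}, applied on $[a,b]$ (equivalently shifted to $[0,b-a]$), with free term $\phi(t):=|\epsilon|\,L_u\,{^{A}I^{\alpha,\beta}_{a+}}|h|(t)$ and constant multiplier $g\equiv L_x$; the smallness requirement $\max g<T^{-\alpha}$ becomes $L_x<(b-a)^{-\alpha}$.

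Finally, invoking Theorem~\ref{thm:GI} I obtain
\[
v(t)\leqslant \phi(t)+\sum_{k=1}^{\infty}L_x^{\,k}\big({^{A}I^{k\alpha,\beta}_{a+}}\phi\big)(t).
\]
Because $\phi$ is proportional to $|\epsilon|$ while $h$, $L_u$ and the kernels are all bounded on the compact interval $[a,b]$, the whole right-hand side is dominated by $C|\epsilon|$ for a constant $C$ independent of $t$; hence $\sup_{t\in[a,b]}|x^{\epsilon}(t)-x^{*}(t)|\to 0$ as $\epsilon\to 0$, which is the asserted convergence of $x^{\epsilon}$ to $x^{*}$. The main obstacle I anticipate is twofold: justifying rigorously the inversion formula that produces the integral equation (verifying that $^{A}I^{\alpha,\beta}_{a+}$ is a genuine left inverse of $^{A}_{C}D^{\alpha,\beta}_{a+}$ on $PC^{1}$, starting from the semigroup identity and $A_{\Gamma}\cdot\bar{A}_{\Gamma}=1$), and dealing with the constraint $L_x<(b-a)^{-\alpha}$ that Theorem~\ref{thm:GI} imposes — should it fail, one would partition $[a,b]$ into subintervals short enough for the local estimate to apply and then propagate the bound across them.
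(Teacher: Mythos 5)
Your proposal is correct and follows essentially the same route as the paper's own proof: invert the Caputo operator to obtain $x^{\epsilon}-x^{*}={}^{A}I^{\alpha,\beta}_{a+}\bigl[f(\cdot,x^{\epsilon},u^{\epsilon})-f(\cdot,x^{*},u^{*})\bigr]$ (the paper writes this inverse with the kernel $\bar{A}$, a purely notational difference), estimate via the Lipschitz property and the monotonicity of the integral operator, and apply Theorem~\ref{thm:GI} to bound $\Vert x^{\epsilon}-x^{*}\Vert$ by a quantity proportional to $|\epsilon|$. The smallness condition you flag, $L_{x}<(b-a)^{-\alpha}$, is exactly the restriction the paper also imposes (stated there as $L_{1}<1/T^{\alpha}$), so your argument matches the published proof step for step, and indeed is slightly more candid about that hypothesis.
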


\begin{proof}
By definition of the differential operator, we have 
\[
^{A}_{C}D^{\alpha, \beta}_{{a}^{+}}x^{\epsilon}(t)
- ^{A}_{C}D^{\alpha, \beta}_{{a}^{+}}x^{*}(t)
= f\left(t, x^{\epsilon}(t), u^{\epsilon}(t)\right) 
- f\left(t, x^{*}(t), u^{*}(t)\right).
\]
By linearity of the operator and applying its inverse operation,
\[
x^{\epsilon}-x^{*}={^{\bar{A}}I^{\alpha, \beta}_{a^{+}}}\Big( 
f\left(t, x^{\epsilon}(t), u^{\epsilon}(t)\right) 
- f\left(t, x^{*}(t), u^{*}(t)\right) \Big), 
\]
where $\bar{A}_{\Gamma}\cdot A_{\Gamma}=1$. Next, 
\[
\Vert x^{\epsilon}-x^{*}\Vert 
\leqslant {^{\bar{A}}I^{\alpha, \beta}_{a^{+}}}\Big(
\Vert f\left(t, x^{\epsilon}(t), u^{\epsilon}(t)\right) 
- f\left(t, x^{*}(t), u^{*}(t)\right)\Vert \Big)
\]
and, by the Lipschitz-property of $f$ 
and the non-decreasing property of $^{A}I^{\alpha, \beta}_{a^{+}}$,
\begin{align*}
\Vert x^{\epsilon}-x^{*}\Vert 
&\leqslant {^{\bar{A}}I^{\alpha, \beta}_{a^{+}}}\Big( L_1\Vert x^{\epsilon}
-x^{*}\Vert \Big)+ ^{\bar{A}}I^{\alpha, \beta}_{a^{+}}\Big( L_2|\epsilon| \Vert h(t)\Vert \Big)\\
&= L_2|\epsilon|^{\bar{A}}I^{\alpha, \beta}_{a^{+}}\Big( 
\Vert h(t)\Vert \Big) + L_1 ^{\bar{A}}
I^{\alpha, \beta}_{a^{+}}\Big( \Vert x^{\epsilon}-x^{*}\Vert \Big).
\end{align*}
Now, applying Gr\"onwall's inequality (Theorem~\ref{inequality}), it follows that
\begin{align*}
\Vert x^{\epsilon}-x^{*}\Vert 
&\leqslant L_2|\epsilon|^{\bar{A}}I^{\alpha, \beta}_{a^{+}}\Big( \Vert h(t)\Vert \Big) 
+ \sum_{k=1}^{\infty}L_1^k\Big[^{\bar{A}}I^{k\alpha, \beta}_{a^{+}}\left( 
L_2|\epsilon|^{\bar{A}}I^{\alpha, \beta}_{a^{+}}\Big( \Vert h(t)\Vert \Big) \right) \Big]\\
&= |\epsilon| L_2 \left[ ^{\bar{A}}I^{\alpha, \beta}_{a^{+}}\Big( \Vert h(t)\Vert \Big) 
+ \sum_{k=1}^{\infty}L_1^k\left(^{\bar{A}}I^{(k+1)\alpha, \beta}_{a^{+}}\Big( 
\Vert h(t)\Vert \Big) \right)\right].
\end{align*}
Finally, due to Theorem~\ref{inequality}, 
if $\displaystyle{L_1 < \frac{1}{T^{\alpha}}}$, then the series 
\[
\sum_{k=1}^{\infty}L_1^k\left(^{\bar{A}}I^{(k+1)\alpha, \beta}_{a^{+}}\Big(
\Vert h(t)\Vert \Big) \right)
\]
converges and, by taking the limit when $\epsilon \rightarrow 0$, 
we obtain the desired result, that is,  
$x^{\epsilon}(t) \rightarrow x^{*}(t)$ for all $t \in [a,b]$.
\end{proof}

%-----------------------------------

\subsection{Differentiability of solutions}

The following result is useful for the proof of our 
necessary optimality condition in Section~\ref{sec:PMP}.

\begin{lemma}[Differentiability of perturbed trajectories]
\label{different}
There exists a function $\eta$ defined on $[a,b]$ such that 
\[
x^{\epsilon}(t)= x^{*}(t) + \epsilon \eta(t) + o(\epsilon).
\]
\end{lemma}

\begin{proof}
Since $f \in C^1$, we have for any fixed index $j$ that
\begin{multline*}
f_j(t, x^{\epsilon}, u^{\epsilon})
= f_j(t, x^{*}, u^{*}) + \sum_{i=1}^n(x^{\epsilon}_i
-x^{*}_i)\frac{\partial f_j(t, x^{*}_i, u^{*})}{\partial x}\\
+ \sum_{i=1}^m(u^{\epsilon}_i-u^{*}_i)
\frac{\partial f_j(t, x^{*}, u^{*}_i)}{\partial u}
+ o(|x^{\epsilon}_i - x^{*}_i|,|u^{\epsilon}_i-u^{*}_i|).
\end{multline*}
Observe that $u^{\epsilon}_i-u^{*}_i= \epsilon h_i(t)$ 
and $u^{\epsilon}_i \rightarrow u^{*}_i$ when $\epsilon \rightarrow 0$ and, 
by Theorem~\ref{cont}, we have $x^{\epsilon}_i \rightarrow x^{*}_i$ 
when $\epsilon \rightarrow 0$. Thus, the residue term can be expressed 
in terms of $\epsilon$ only, that is, the residue is $o(\epsilon)$. 
Therefore, for all indexes $j\in \{1, \ldots, n\}$, we have the vector expression
\[
f(t, x^{\epsilon}, u^{\epsilon})
= f(t, x^{*}, u^{*}) + (x^{\epsilon}-x^{*}) J_x(t, x^{*}, u^{*}) 
+ \epsilon h(t) J_u(t, x^{*}, u^{*}) + o(\epsilon),
\]
where $J_x(t, x^{*}, u^{*}) $ and $J_u(t, x^{*}, u^{*}) $ 
are the Jacobian matrices of $f$ with respect to $x$ and $u$, respectively, 
and evaluated at $(t, x^{*}, u^{*})$, that is, 
\begin{equation}
J_x(t, x^{*}, u^{*})
= \begin{pmatrix}
\frac{\partial f_j}{\partial x_i}
\end{pmatrix}_{1\leqslant i, j\leqslant n}
\quad \text{ and }
J_u(t, x^{*}, u^{*})= 
\begin{pmatrix}
\frac{\partial f_j}{\partial u_i}
\end{pmatrix}
_{1\leqslant i\leqslant m, \  1\leqslant j\leqslant n}.
\end{equation}  
Next, we have 
\[
^{A}_{C}D^{\alpha, \beta}_{{a}^{+}}x^{\epsilon}
= ^{A}_{C}D^{\alpha, \beta}_{{a}^{+}}x^{*} 
+ (x^{\epsilon}-x^{*})\cdot J_x(t, x^{*}, u^{*}) 
+ \epsilon h(t)\cdot J_u(t, x^{*}, u^{*}) + o(\epsilon)
\]
and this leads to 
\[
\lim_{\epsilon \rightarrow 0} \left[
\frac{^{A}_{C}D^{\alpha, \beta}_{{a}^{+}}( x^{\epsilon}-x^{*})}{\epsilon} 
-\frac{( x^{\epsilon}-x^{*})}{\epsilon}\cdot J_x(t, x^{*}, u^{*}) 
- h(t)\cdot J_u(t, x^{*}, u^{*}) \right]=0,
\]
that is,
\[
^{A}_{C}D^{\alpha, \beta}_{{a}^{+}} \left(\lim_{\epsilon \rightarrow 0} 
\frac{  x^{\epsilon}-x^{*}}{\epsilon} \right) 
= \lim_{\epsilon \rightarrow 0}  
\frac{( x^{\epsilon}-x^{*})}{\epsilon}\cdot J_x(t, x^{*}, u^{*})  
+ h(t)\cdot J_u(t, x^{*}, u^{*}). 
\]
It remains to prove the existence of the limit
$\displaystyle{\lim_{\epsilon \rightarrow 0} 
\frac{x^{\epsilon}-x^{*}}{\epsilon}}$. 
For this purpose, we set $\eta := \displaystyle{\lim_{\epsilon \rightarrow 0} 
\frac{x^{\epsilon}-x^{*}}{\epsilon}}$. It is easy to see that the limit exists 
as solution of the following system of fractional differential equations:
\begin{equation*}
\begin{cases}
^{A}_{C}D^{\alpha, \beta}_{{a}^{+}} \eta(t)
= \eta(t)\cdot J_x(t, x^{*}, u^{*})  
+h(t)\cdot J_u(t, x^{*}, u^{*}) ,\\[3mm]
\eta(a)= 0.
\end{cases}
\end{equation*}
This ends the proof.
\end{proof}

%--------------------------------------

\subsection{Pontryagin's maximum principle} 
\label{sec:PMP}

The following result is a necessary optimality condition 
of Pontryagin type for problem \eqref{bp}.

\begin{theorem}[Pontryagin Maximum Principle for \eqref{bp}]
\label{theo}
If $(x^{*}(\cdot), u^{*}(\cdot))$ is an optimal pair for \eqref{bp}, 
then there exists $\lambda_0 \in \{0,1\}$ and 
$\lambda(\cdot) \in PC^1\left( [a, b]; \mathbb{R}^n\right)$, 
called the adjoint variables, such that the following conditions 
hold for all $t$ in the interval $[a, b]$:
\begin{itemize}
\item the nontriviality condition
\begin{equation}\label{trivial}
\left(\lambda_0, \lambda\right)\neq (0,0);
\end{equation}
\item the optimality condition
\begin{equation}\label{opt}
\nabla_{u} H\left( (t,x^{*}(t), u^{*}(t), \lambda_0, \lambda(t)\right)=0;
\end{equation}
\item the adjoint system
\begin{equation}
\label{adj}
^{A}_{RL}D^{\alpha, \beta}_{b^{-}} \lambda(t)
=\nabla_{x} H\left( (t,x^{*}(t), u^{*}(t), \lambda_0, \lambda(t)\right);
\end{equation}
\item the transversality condition
\begin{equation}
\label{trans}
^{\bar{A}}I^{1-\alpha, \beta}_{b^{-}}\lambda(b)=0;
\end{equation}
\end{itemize}
where function $H$, defined by
\[
H\left(t, x, u, \lambda_0,\lambda \right)= \lambda_0 w(t)L(t, x, u) 
+ \sum_{j=1}^n \lambda_j f_j(t, x, u),
\]
is called the Hamiltonian.
\end{theorem}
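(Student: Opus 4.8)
The plan is to derive the stated conditions by a first-variation argument, using the adjoint variable $\lambda$ as a multiplier that eliminates the state variation. Fix a variation $h(\cdot) \in PC([a,b];\mathbb{R}^m)$ and consider the perturbed control $u^{\epsilon}=u^{*}+\epsilon h$ with associated trajectory $x^{\epsilon}$. By Lemma~\ref{different}, $x^{\epsilon}=x^{*}+\epsilon\eta+o(\epsilon)$, where $\eta$ solves the variational system
\[
{}^{A}_{C}D^{\alpha,\beta}_{{a}^{+}}\eta = \eta\cdot J_x(t,x^{*},u^{*}) + h\cdot J_u(t,x^{*},u^{*}), \qquad \eta(a)=0.
\]
Since $(x^{*},u^{*})$ is optimal, the scalar map $\epsilon\mapsto\mathcal{J}[x^{\epsilon},u^{\epsilon}]$ attains a maximum at $\epsilon=0$, so its derivative there vanishes. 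Differentiating $\mathcal{J}=\int_a^b w(t)L(t,x^{\epsilon},u^{\epsilon})\,dt$ under the integral sign (justified by $L\in C^1$ and Lemma~\ref{different}) yields the first-variation identity
\[
\int_a^b w(t)\bigl(\nabla_x L\cdot\eta + \nabla_u L\cdot h\bigr)\,dt = 0.
\]

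Next I would introduce the adjoint variable. Taking $\lambda_0=1$ (the normal case, for which the nontriviality \eqref{trivial} holds automatically), I define $\lambda(\cdot)$ as the solution of the right-sided fractional terminal-value problem consisting of the adjoint system \eqref{adj}, namely ${}^{A}_{RL}D^{\alpha,\beta}_{b^{-}}\lambda=\nabla_x H$, together with the transversality condition \eqref{trans}. Existence of such a $\lambda$ follows from well-posedness of right-sided FOB equations, in parallel with Lemmas~\ref{cont} and \ref{different}. The purpose of this specific choice is precisely to cancel the state variation $\eta$, which is the mechanism driving the whole argument.

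The core step is to pair $\lambda$ with the variational system and integrate over $[a,b]$. Summing over components and applying the integration by parts formula of Lemma~\ref{lem:int:parts} to each coordinate (with $x\mapsto\lambda_i$, $y\mapsto\eta_i$) transfers the left Caputo derivative acting on $\eta$ onto the right Riemann--Liouville derivative acting on $\lambda$:
\[
\int_a^b \lambda\cdot{}^{A}_{C}D^{\alpha,\beta}_{{a}^{+}}\eta\,dt
= \Bigl[\,\textstyle\sum_i \eta_i\,{}^{\bar{A}}I^{1-\alpha,\beta}_{b^{-}}\lambda_i\,\Bigr]_a^b
+ \int_a^b \eta\cdot{}^{A}_{RL}D^{\alpha,\beta}_{b^{-}}\lambda\,dt.
\]
The boundary term vanishes: at $t=a$ because $\eta(a)=0$, and at $t=b$ because of the transversality condition \eqref{trans}. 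Substituting the variational equation on the left and the adjoint system \eqref{adj} on the right, the terms involving $\nabla_x(\lambda\cdot f)\cdot\eta$ cancel, leaving an identity relating $\int_a^b \eta\cdot\lambda_0 w\,\nabla_x L\,dt$ to $\int_a^b h\cdot\nabla_u(\lambda\cdot f)\,dt$. Combining this with the first-variation identity to remove the $\eta$-dependent integral, and recalling $H=\lambda_0 w L+\lambda\cdot f$, collapses everything to
\[
\int_a^b h(t)\cdot\nabla_u H\bigl(t,x^{*}(t),u^{*}(t),\lambda_0,\lambda(t)\bigr)\,dt = 0.
\]

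Since $h\in PC([a,b];\mathbb{R}^m)$ is arbitrary, the fundamental lemma of the calculus of variations gives the optimality condition \eqref{opt}, $\nabla_u H=0$, while \eqref{adj} and \eqref{trans} hold by construction of $\lambda$ and \eqref{trivial} is immediate with $\lambda_0=1$. I expect the main obstacle to be the careful bookkeeping in the fractional integration by parts --- matching the left-sided Caputo operator on $\eta$ against the right-sided Riemann--Liouville operator on $\lambda$ and verifying that the boundary contributions cancel --- since this is exactly what forces the adjoint system and transversality condition into the right-sided forms \eqref{adj} and \eqref{trans}; a secondary point is rigorously justifying both the differentiation under the integral sign and the existence of $\lambda$ as the solution of a terminal-value fractional problem.
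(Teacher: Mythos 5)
Your proposal is correct and takes essentially the same route as the paper's own proof: a first-variation argument that uses the differentiability of perturbed trajectories (Lemma~\ref{different}), the integration by parts formula of Lemma~\ref{lem:int:parts} to transfer the left Caputo derivative onto the right Riemann--Liouville derivative of $\lambda$, the choice of $\lambda$ as a solution of \eqref{adj} with \eqref{trans} precisely so that the state-variation terms cancel, and the fundamental lemma of the calculus of variations to conclude \eqref{opt}. The only deviations are organizational and harmless: you integrate by parts against the linearized variation $\eta$ (killing the endpoint term at $a$ via $\eta(a)=0$), whereas the paper integrates by parts on $x^{\epsilon}$ itself inside the augmented functional $\phi(\epsilon)$ and then differentiates at $\epsilon=0$ (where the term at $a$ drops because $x^{\epsilon}(a)=x_a$ is independent of $\epsilon$); and you fix $\lambda_0=1$, which suffices for the existence claim of the theorem as stated (the abnormal case $\lambda_0=0$ only becomes relevant in the later isoperimetric corollary), while, like the paper, you assume rather than prove solvability of the adjoint terminal-value problem, so the level of rigor matches.
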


\begin{proof}
Let $(x^{*}(\cdot), u^{*}(\cdot))$ be solution of the problem, 
$h(\cdot) \in PC\left( [a, b]; \mathbb{R}^m\right)$ be a variation, 
and $\epsilon $ a real constant. Define 
$u^{\epsilon}(t)= u^{*}(t)+ \epsilon h(t)$, so that 
$u^{\epsilon}\in PC\left( [a, b]; \mathbb{R}^m\right)$. 
Let $x^{\epsilon}$ be the corresponding trajectory to the control $u^{\epsilon}$, 
meaning it is the state solution to the following system: 
\begin{equation}
\label{equatepsi}
^{A}_{C}D^{\alpha, \beta}_{{a}^{+}} x^{\epsilon}(t)
= f\left(t, x^{\epsilon}(t), u^{\epsilon}(t)\right), 
\quad x^{\epsilon}(a)= x_a.
\end{equation}
Note that $u^{\epsilon}(t) \rightarrow u^{*}(t)$ for all $t\in [a, b]$ 
whenever $\epsilon \rightarrow 0$. Further, the first derivative of 
$u^{\epsilon}$ with respect to $\epsilon$ at $\epsilon = 0$ can be obtained as
\begin{equation}
\label{eqpartialu}
\displaystyle{\frac{\partial 
u^{\epsilon}(t)}{\partial \epsilon}\Bigr|_{\epsilon=0}  = h(t)}.
\end{equation} 
Similarly, by Lemma~\ref{cont}, it follows that $x^{\epsilon}(t)\rightarrow x^{*}(t)$, 
for each fixed $t$, as $ \epsilon \rightarrow 0$. Also, from Lemma~\ref{different}, 
the first derivative of $x^{\epsilon}$ with respect to $\epsilon$ at $\epsilon = 0$, 
\begin{equation}
\label{eqpartialx}
\displaystyle{\frac{\partial x^{\epsilon}(t)}{\partial 
\epsilon}\Bigr|_{\epsilon=0}},
\end{equation}
exists for each $t$. The objective functional at $(x^{\epsilon}, u^{\epsilon})$ is 
\[
\mathcal{J}(x^{\epsilon}, u^{\epsilon})
= \int^b_a Lw(t)\left(t, x^{\epsilon}(t), u^{\epsilon}(t) \right)dt.
\]
This functional can be extended to handle abnormal 
multipliers in the following way:
\[
\mathcal{J}_{\lambda_0}(x^{\epsilon}, u^{\epsilon})
= \lambda_0 \mathcal{J}(x^{\epsilon}, u^{\epsilon})
= \int^b_a \lambda_0 w(t) 
L\left(t, x^{\epsilon}(t), u^{\epsilon}(t) \right)dt,
\]
where $\lambda_0 \in \{ 0, 1\}$ and the case $\lambda_0 =0$ 
is known as the abnormal case. Next, we introduce the adjoint 
vector function $\lambda$. Let $\lambda (\cdot)$ be in 
$PC^{1}\left( [a, b]; \mathbb{R}^n\right)$, to be determined. 
By the integration by parts formula, we have, for any fix index $j$, that
\[
\int^b_a \lambda_j(t) ^{A}_{C}D^{\alpha, \beta}_{{a}^{+}}x^{\epsilon}_j(t)dt 
= \left[ x^{\epsilon}_j(t) ^{\bar{A}}I^{1-\alpha, \beta}_{b^{-}}\lambda_j(t) 
\right]^b_a + \int^b_ax^{\epsilon}_j(t) ^{A}_{RL}D^{\alpha, \beta}_{b^{-}} \lambda_j (t)dt
\]
and, summing up for $j=1, \ldots, n$, we get 
the following expression in inner product form:
\begin{multline*}
\int^b_a \lambda(t)\cdot ^{A}_{C}D^{\alpha, \beta}_{{a}^{+}}
x^{\epsilon}(t)dt - \int^b_ax^{\epsilon}(t)
\cdot ^{A}_{RL}D^{\alpha, \beta}_{b^{-}} \lambda (t)dt 
-  x^{\epsilon}(b)\cdot ^{\bar{A}}I^{1-\alpha, \beta}_{b^{-}}\lambda(b)\\
+ x^{\epsilon}(a)\cdot ^{\bar{A}}I^{1-\alpha, \beta}_{b^{-}}\lambda(a) =0.
\end{multline*}
Adding this zero to the expression 
$\mathcal{J}_{\lambda_0}(x^{\epsilon}, u^{\epsilon})$ gives 
\begin{gather*}
\phi (\epsilon)= \mathcal{J}_{\lambda_0}(x^{\epsilon}, u^{\epsilon})
= \int^b_a \left[\lambda_0 w(t)L\left( t, x^{\epsilon}(t), u^{\epsilon}(t)\right) 
+ \lambda(t)\cdot ^{A}_{C}D^{\alpha, \beta}_{{a}^{+}}x^{\epsilon}(t)\right. \\
\left.  - x^{\epsilon}(t)\cdot ^{A}_{RL}D^{\alpha, \beta}_{b^{-}} \lambda (t) \right]dt 
- x^{\epsilon}(b)\cdot ^{\bar{A}}I^{1-\alpha, \beta}_{b^{-}}\lambda(b)
+ x^{\epsilon}(a)\cdot ^{\bar{A}}I^{1-\alpha, \beta}_{b^{-}}\lambda(a), 
\end{gather*}
which by \eqref{equatepsi} is equivalent to
\begin{gather*}
\phi (\epsilon)= \mathcal{J}_{\lambda_0}(x^{\epsilon}, u^{\epsilon})
= \int^b_a \left[\lambda_0 w(t)L\left( t, x^{\epsilon}(t), u^{\epsilon}(t)\right) 
+ \lambda(t)\cdot f\left(t, x^{\epsilon}(t), u^{\epsilon}(t) \right)\right. \\
\left. - x^{\epsilon}(t)\cdot ^{A}_{RL}D^{\alpha, \beta}_{b^{-}} \lambda (t) \right]dt 
- x^{\epsilon}(b)\cdot ^{\bar{A}}I^{1-\alpha, \beta}_{b^{-}}\lambda(b)
+ x^{\epsilon}(a)\cdot ^{\bar{A}}I^{1-\alpha, \beta}_{b^{-}}\lambda(a). 
\end{gather*}
The overall optimization problem is now reduced to the study of function $\phi$ and, 
for this purpose, we must have $\phi$ non identically zero ($\phi \neq 0$). 
To ensure this, it is sufficient to consider that $\left(\lambda_0, \lambda(t)\right)\neq (0,0)$, 
meaning that both multipliers can not vanish simultaneously.
The maximum of $\mathcal{J}_{\lambda_0}$ occurs at $(x^{*}, u^{*})= (x^0, u^0)$, 
so the derivative of $\phi(\epsilon)$ with respect to $\epsilon $ 
at $\epsilon=0 $ must vanish, that is,
\begin{align*}
0&= \phi'(0)= \frac{d }{d \epsilon } J_{\lambda_0}(x^{\epsilon}, u^{\epsilon})|_{\epsilon=0}\\
&= \int^b_a \left[ \lambda_0 w(t)\left(\sum_{i=1}^n\frac{\partial L}{\partial x_i}
\frac{\partial x^{\epsilon}_i(t)}{\partial \epsilon}\Bigr|_{\epsilon=0} 
+ \sum_{i=1}^m \frac{\partial L}{\partial u_i}\frac{\partial 
u^{\epsilon}_i(t)}{\partial \epsilon}\Bigr|_{\epsilon=0} \right)\right. \\
& \quad \left. +  \lambda_1 (t)\left( \sum_{i=1}^n\frac{\partial f_1}{\partial x_i}
\frac{\partial x^{\epsilon}_i(t)}{\partial \epsilon}\Bigr|_{\epsilon=0} 
+ \sum_{i=1}^m\frac{\partial f_1}{\partial u_i}\frac{\partial 
u^{\epsilon}_i(t)}{\partial \epsilon}\Bigr|_{\epsilon=0}\right)\right.\\  
&\quad \left. + \cdots + \lambda_n (t) \sum_{i=1}^n\frac{\partial f_n}{\partial x_i}
\frac{\partial x^{\epsilon}_i(t)}{\partial \epsilon}\Bigr|_{\epsilon=0}\right. \\
& \quad \left.  + \lambda_n (t) \sum_{i=1}^m\frac{\partial f_n}{\partial u_i}
\frac{\partial u^{\epsilon}_i(t)}{\partial \epsilon}\Bigr|_{\epsilon=0} 
-^{A}_{RL}D^{\alpha, \beta}_{b^{-}} \lambda_1 (t)\frac{\partial 
x^{\epsilon}_1(t)}{\partial \epsilon}\Bigr|_{\epsilon=0}\right.\\ 
&\quad \left. - \cdots   -^{A}_{RL}D^{\alpha, \beta}_{b^{-}} \lambda_n (t)
\frac{\partial x^{\epsilon}_n(t)}{\partial \epsilon}\Bigr|_{\epsilon=0} \right]dt \\
&\quad - {^{\bar{A}}I^{1-\alpha, \beta}_{b^{-}}}\lambda_1(b) 
\frac{\partial x^{\epsilon}_1(b)}{\partial \epsilon}\Bigr|_{\epsilon=0}
-\cdots  -^{\bar{A}}I^{1-\alpha, \beta}_{b^{-}}\lambda_1(b) 
\frac{\partial x^{\epsilon}_n(b)}{\partial \epsilon}\Bigr|_{\epsilon=0},
\end{align*}
where the partial derivatives of $L$ and $f=(f_1, \cdots, f_n)$ with respect 
to $x$ and $u$, $x=(x_1, \cdots, x_n)$ and $u=(u_1, \cdots, u_m)$, are evaluated at 
$ \left( t, x^{*}(t), u^{*}(t) \right)$. Thus, using \eqref{eqpartialu} 
and \eqref{eqpartialx}, and rearranging the terms, we obtain that
\begin{multline*}
\int^b_a \left[\left( \lambda_0 w(t)\nabla_x L 
+ \lambda(t) J_x-^{A}_{RL}D^{\alpha, \beta}_{b^{-}} \lambda(t)\right)
\cdot \displaystyle{ \frac{\partial x^{\epsilon}(t)}{\partial 
\epsilon}\Bigr|_{\epsilon=0} } \right. \\
\left. + \left( \lambda_0 w(t)\nabla_u L + \lambda(t)J_u\right)
\cdot \displaystyle{\frac{\partial u^{\epsilon}(t)}{\partial 
\epsilon}\Bigr|_{\epsilon=0}}  \right]dt -^{\bar{A}}I^{1-\alpha, \beta}_{b^{-}}
\lambda(b)\cdot \displaystyle{ \frac{\partial 
x^{\epsilon}(t)}{\partial \epsilon}\Bigr|_{\epsilon=0} } = 0,
\end{multline*}
where $J_x$ and $J_u$ are the Jacobian matrices of $f$, respectively with respect to $x$ and $u$  
and evaluated at $(t, x^{*}, u^{*})$, that is, 
\begin{equation}
J_x= 
\begin{pmatrix}
\frac{\partial f_j}{\partial x_i}
\end{pmatrix}_{1\leqslant i, j\leqslant n}
\quad \text{and} \quad
J_u= \begin{pmatrix}
\frac{\partial f_j}{\partial u_i}
\end{pmatrix}_{1\leqslant i\leqslant m, \, \,  1\leqslant j\leqslant n}.
\end{equation}
Setting $H= \lambda_0 w(t)L + \lambda \cdot f$, it follows that
\begin{multline*}
\int^b_a \left[ \Big(\nabla_x H -  ^{A}_{RL}D^{\alpha, \beta}_{b^{-}} 
\lambda (t)\Big)\cdot \displaystyle{ \frac{\partial x^{\epsilon}(t)}{\partial 
\epsilon}\Bigr|_{\epsilon=0} }  + \nabla_u H \cdot h(t)\right]dt \\
- ^{\bar{A}}I^{1-\alpha, \beta}_{b^{-}}\lambda(b)
\frac{\partial x^{\epsilon}(b)}{\partial \epsilon}\Bigr|_{\epsilon=0} =0,
\end{multline*}
where the partial derivatives of $H$ are evaluated at
$\left( t, x^{*}(t), u^{*}(t), \lambda_0, \lambda(t) \right)$. Now, choosing
\[
^{A}_{RL}D^{\alpha, \beta}_{b^{-}} \lambda (t) = \nabla_x H, 
\quad \text{ with } ^{\bar{A}}I^{1-\alpha, \beta}_{b^{-}}\lambda(b)=0,
\]
that is, given the adjoint equation \eqref{adj} 
and the transversality condition \eqref{trans}, one obtains
\[
\int^b_a \nabla_u H \cdot h(t)=0
\]
and, by the fundamental lemma of the  calculus of variations \cite{MR500859}, 
we have the optimality condition \eqref{opt}:
\[
\nabla_u H\left( t, x^{*}(t), u^{*}(t), \lambda_0, \lambda(t) \right) =0.
\]
This concludes the proof.
\end{proof}

\begin{example}
Let us consider the following problem:
\begin{equation}
\label{exple}
\begin{gathered}
\int^2_0 -w(t)\left[\parallel x(t)-(t^2, e^{1-t}) \parallel^2 
+ \parallel u(t)-(t^2e^{-t}, -t^6) \parallel^2 \right]dt \longrightarrow \max,\\
\begin{cases}
^{A}_{C}D^{\frac{1}{3}, \pi}_{{a}^{+}}x_1(t)= u_1(t),\\
^{A}_{C}D^{\frac{1}{3}, \pi}_{{a}^{+}}x_2(t)= u_2^2(t) + 2t^6u_2(t),
\end{cases}\\
x_1(0)=0, \quad x_2(0)= e,
\end{gathered}
\end{equation}
where $w(t)= \displaystyle{\frac{(2-t)^{\frac{1}{3}-1}
A\left( (2-t)^{\pi} \right)}{\Gamma(\alpha)A(1)}}$. 
In order to apply Theorem~\ref{theo}, let us define 
the normal Hamiltonian: 
\begin{multline*}
H(t, x, u, \lambda)= -w(t)\left[\parallel x(t)
-(t^2, e^{1-t}) \parallel^2 + \parallel u(t)
-(t^2e^{-t}, -t^6) \parallel^2 \right] \\
+ \lambda_1 u_1 + \lambda_2 u^2_2 + 2\lambda_2 t^6 u_2.
\end{multline*}
We have:
\begin{itemize}
\item by the optimality condition,
\begin{equation}
\label{exopt}
\begin{cases}
\lambda_1(t)=-2\left( u_1(t)-t^2e^{-t}\right),\\[3mm] 
2\left( u_2(t)+t^6\right)\left(1+\lambda_2(t) \right)=0;
\end{cases}
\end{equation}
\item by the adjoint system,
\begin{equation}
\label{exadjt}
\begin{cases}
^{A}_{C}D^{\frac{1}{3}, \pi}_{{0}^{+}}= 2\left(x_1(t)-t^2 \right),\\[3mm]
^{A}_{C}D^{\frac{1}{3}, \pi}_{{2}^{+}} = 2\left( x_2(t)-e^{1-t}\right);
\end{cases}
\end{equation}
\item by the transversality condition,
\begin{equation}
\label{extrans}
\begin{cases}
^{\bar{A}}I^{\frac{1}{3}, \pi}_{2-}\lambda_1(2)=0,\\
^{\bar{A}}I^{\frac{1}{3}, \pi}_{2-}\lambda_2(2)=0.
\end{cases}
\end{equation}
\end{itemize}
Therefore, we observe that equations \eqref{exopt}, \eqref{exadjt} 
and \eqref{extrans} are satisfied by the following pair of functions: 
$x(t)= (t^2, e^{1-t})$ and $u(t)= (t^2e^{-t}, -t^6)$. These are
the Pontryagin extremals of the problem.
\end{example}

%----------------------------

\subsection{Calculus of variations}

The problem of the calculus of variations involving fractional operators 
with a general analytic kernel consists to find a piecewise 
continuously differentiable curve $x$ solution of 
\begin{equation}
\label{cv}
\begin{gathered}
\mathcal{J}[x(\cdot)]
= \frac{1}{\Gamma(\alpha)A(1)}{^{\bar{A}}I^{\alpha, 
\beta}_{a+}\left[L\left(\cdot, x(\cdot), 
^{A}_{C}D^{\alpha, \beta}_{{a}^{+}}x(\cdot) \right) \right](b)} \longrightarrow \max,\\
\textup{ subject to \qquad  \qquad}\\
x(a)= x_a, \quad x(b)=x_b,
\end{gathered}
\end{equation}
where function $L$ is piecewise continuously differentiable, that is, 
\[
L \in PC^{1}\left( [a, b]\times \mathbb{R}^n \times \mathbb{R}^n; \mathbb{R}\right).
\]
The aforementioned variational problem \eqref{cv} is 
a special case of our optimal control problem \eqref{bp}.

\begin{corollary} 
\label{cor:EL:fund:CoV}
If $x^{*}$ is solution to problem \eqref{cv}, 
then it satisfies the the following Euler--Lagrange equation:
\begin{multline}
\label{euler}
^{A}_{RL}D^{\alpha, \beta}_{b^{-}} \left[w(t)\nabla_{u} 
L\left( (t,x^{*}(t), \, \,^{A}_{C}D^{\alpha, \beta}_{a^{+}} x^{*}(t)\right)\right]\\ 
+ w(t)\nabla_{x} L\left( (t,x^{*}(t), \, \, ^{A}_{C}D^{\alpha, \beta}_{a^{+}} x^{*}(t)\right)=0.
\end{multline}
\end{corollary}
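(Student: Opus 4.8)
The plan is to recognize problem \eqref{cv} as the special instance of the optimal control problem \eqref{bp} in which the FOB Caputo derivative itself plays the role of the control. Concretely, I would set $u(\cdot) := {^{A}_{C}D^{\alpha, \beta}_{a^{+}}} x(\cdot)$, so that the state equation becomes ${^{A}_{C}D^{\alpha, \beta}_{a^{+}}} x(t) = u(t)$, i.e. the velocity field is the projection $f(t,x,u) = u$, and, recalling from the Notation following \eqref{bp} that the cost can be written as $\mathcal{J}[x(\cdot)] = \int_a^b w(t)\, L(t, x(t), u(t))\, dt$, the functional takes the Lagrange form of \eqref{bp}. The only structural difference is that \eqref{cv} additionally fixes the terminal state $x(b)=x_b$; I would carry this extra constraint explicitly, since it is precisely what alters the transversality condition.

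With this identification the Hamiltonian of Theorem~\ref{theo} reads $H(t,x,u,\lambda_0,\lambda) = \lambda_0\, w(t)\, L(t,x,u) + \lambda\cdot u$. I would first rule out abnormality: since $f(t,x,u)=u$ gives $J_u = \nabla_u f = I_n$, the optimality condition \eqref{opt}, $\nabla_u H = 0$, reads $\lambda_0\, w(t)\,\nabla_u L + \lambda(t) = 0$; were $\lambda_0 = 0$, this would force $\lambda \equiv 0$, contradicting the nontriviality \eqref{trivial}. Hence $\lambda_0 = 1$, the problem is normal, and we obtain the multiplier relation $\lambda(t) = -w(t)\,\nabla_u L$.

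Next I would record the remaining conditions. Because $f$ does not depend on $x$, we have $J_x = \nabla_x f = 0$, so the adjoint system \eqref{adj} collapses to ${^{A}_{RL}D^{\alpha, \beta}_{b^{-}}}\lambda(t) = \nabla_x H = w(t)\,\nabla_x L$. For the endpoint I would not invoke \eqref{trans} directly, since that condition was derived in Theorem~\ref{theo} under a \emph{free} terminal state; instead I would revisit the boundary term $-{^{\bar{A}}I^{1-\alpha,\beta}_{b^-}}\lambda(b)\cdot \frac{\partial x^{\epsilon}(b)}{\partial\epsilon}\bigr|_{\epsilon=0}$ occurring in that proof. Here $x^{\epsilon}(b)=x_b$ for every admissible variation, so this derivative vanishes identically and the term drops out without imposing any condition on $\lambda(b)$. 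This correct handling of the fixed endpoint is the one genuinely nonautomatic point of the argument.

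Finally I would eliminate the multiplier. Substituting $\lambda(t) = -w(t)\,\nabla_u L$ into the adjoint equation and using linearity of ${^{A}_{RL}D^{\alpha, \beta}_{b^{-}}}$ gives $-\,{^{A}_{RL}D^{\alpha, \beta}_{b^{-}}}\bigl[w(t)\,\nabla_u L\bigr] = w(t)\,\nabla_x L$, where throughout $L$ and its gradients are evaluated at $\left(t, x^{*}(t), {^{A}_{C}D^{\alpha,\beta}_{a^{+}}} x^{*}(t)\right)$ because $u = {^{A}_{C}D^{\alpha,\beta}_{a^{+}}} x$. Rearranging yields exactly the Euler--Lagrange equation \eqref{euler}, which completes the proof.
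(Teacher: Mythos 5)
Your proposal is correct and follows essentially the same route as the paper: identify \eqref{cv} as the instance of \eqref{bp} with $f(t,x,u)=u$, take $H=\lambda_0\,w(t)\,L(t,x,u)+\lambda\cdot u$, rule out $\lambda_0=0$ via the optimality condition \eqref{opt} combined with the nontriviality condition \eqref{trivial}, and then eliminate $\lambda(t)=-w(t)\nabla_u L$ using the adjoint equation \eqref{adj}. The one point where you go beyond the paper is the fixed endpoint: the paper invokes Theorem~\ref{theo} without commenting on the constraint $x(b)=x_b$ (its transversality condition \eqref{trans} was derived for a free terminal state and is simply not mentioned in the corollary's proof), whereas you correctly note that $\frac{\partial x^{\epsilon}(b)}{\partial\epsilon}\bigr|_{\epsilon=0}=0$ for admissible variations, so the boundary term drops without imposing any condition on $\lambda(b)$ --- a justification the paper leaves implicit.
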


\begin{proof}
It is easy to see that the optimal control problem \eqref{bp} coincides,
in the particular case when $^{A}_{C}D^{\alpha, \beta}_{{a}^{+}}x(t) = u(t)$,
with the  problem of the calculus of variations defined in \eqref{cv}. 
Next, we define the Hamiltonian function 
\[
H\left(t, x, u, \lambda_0, \lambda \right)= \lambda_0 w(t) L\left(t, x, u \right) + \lambda\cdot u
\]
and, by application of Theorem~\ref{theo}, we have:
\begin{itemize}
\item from optimality condition \eqref{opt},
\end{itemize}
\begin{equation}
\label{cvopt}
\lambda(t)= -\lambda_0 w(t)\nabla_{u} L\left( (t,x^{*}(t), u^{*}(t)\right).
\end{equation}
It follows that $\lambda_0=0$ implies $\lambda(t)\equiv 0$, 
which is not a possibility by the nontriviality condition \eqref{trivial}. 
Therefore, $\lambda_0=1$.
\item  Following the adjoint system \eqref{adj}, we have 
\begin{equation}
\label{cvadj}
^{A}_{RL}D^{\alpha, \beta}_{b^{-}} \lambda(t)
= w(t)\nabla_{x} L\left( (t,x^{*}(t), u^{*}(t)\right).
\end{equation}
Combining \eqref{cvopt} and \eqref{cvadj}, we obtain the Euler--Lagrange equation \eqref{euler}.
\end{proof}

%------------------------------

\subsubsection*{Isoperimetric problems}

An important class of variational problems are the 
fractional isoperimetric problems \cite{MR2921903,MR3826660}.
The isoperimetric problem involving fractional operators 
with a general analytic kernel consists to find a piecewise 
continuously differentiable curve $x$ solution of 
\begin{equation}
\label{ip}
\begin{gathered}
\mathcal{J}[x(\cdot)]=\frac{1}{\Gamma(\alpha)A(1)}{^{\bar{A}}
I^{\alpha, \beta}_{a+}\left[L\left(\cdot, x(\cdot), ^{A}_{C}
D^{\alpha, \beta}_{{a}^{+}}x(\cdot) \right) \right](b)}\longrightarrow \max, \\
\textup{ subject to }\\
^{\bar{A}}I^{\alpha, \beta}_{a+} y\left(t, x(t), 
\, \, ^{A}_{C}D^{\alpha, \beta}_{{a}^{+}}x(t)\right)= l,\\
x(a)= x_a, \quad x(b)=x_b.
\end{gathered}
\end{equation}

The isoperimetric problem \eqref{ip} is also a particular case 
of our optimal control problem \eqref{bp}.

\begin{corollary}
\label{cor:EL:iso}
If $x^{*}$ is solution to problem \eqref{ip}, 
then it satisfies the the following fractional differential equation:
\[
^{A}_{RL}D^{\alpha, \beta}_{b^{-}} \left[\nabla_{u} \left( w(t)\tilde{L} 
+ \lambda \tilde{y}\right)\right] 
+ \nabla_{x} \left( w(t)\tilde{L} + \lambda \tilde{y}\right)=0,
\]
where $\tilde{L}= L\left(t,x^{*}(t), \, \,^{A}_{C}D^{\alpha, \beta}_{a^{+}} x^{*}(t)\right)$,  
$\tilde{y}= y\left( t,x^{*}(t), \, \,^{A}_{C}D^{\alpha, \beta}_{a^{+}} x^{*}(t)\right)$ 
and $\lambda$ is a nonzero real constant.
\end{corollary}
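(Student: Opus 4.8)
The plan is to treat \eqref{ip} by the Lagrange multiplier rule for isoperimetric problems and then reduce to the unconstrained Euler--Lagrange equation already obtained in Corollary~\ref{cor:EL:fund:CoV}. As in that corollary, I first write the problem in control form via $u(t)={}^{A}_{C}D^{\alpha,\beta}_{a^+}x(t)$, so that the cost becomes $\int_a^b w(t)\tilde L\,dt$ and the isoperimetric constraint becomes an integral functional $\mathcal{I}[x]:={}^{\bar{A}}I^{\alpha,\beta}_{a+}[\tilde y](b)=l$ of $(t,x,u)$; here $\tilde L$ and $\tilde y$ are $L$ and $y$ evaluated along $(t,x^*,{}^{A}_{C}D^{\alpha,\beta}_{a^+}x^*)$.

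First I would produce the multiplier $\lambda$. Under the regularity hypothesis that $x^*$ is not an extremal of the constraint functional $\mathcal{I}$ alone, there is an admissible variation $\eta_2$ (vanishing at $t=a$ and $t=b$, so that the boundary data $x(a)=x_a$, $x(b)=x_b$ are preserved) along which the first variation of $\mathcal{I}$ is nonzero. Taking the two-parameter family $x^*+\epsilon_1\eta_1+\epsilon_2\eta_2$ and imposing $\mathcal{I}=l$, the implicit function theorem lets me solve $\epsilon_2=\epsilon_2(\epsilon_1)$ near the origin; differentiating the reduced cost at $\epsilon_1=0$ and using that $(0,0)$ is a constrained maximum yields a real constant $\lambda$ such that the first variation of the augmented functional $\mathcal{J}-\lambda\,(\mathcal{I}-l)$ vanishes for every admissible $\eta_1$. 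The same regularity hypothesis forces $\lambda\neq 0$, in the spirit of the exclusion of the abnormal case $\lambda_0=0$ in the proof of Corollary~\ref{cor:EL:fund:CoV}.

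With $\lambda$ fixed, the remaining step is a direct application of Corollary~\ref{cor:EL:fund:CoV}. The vanishing first variation identifies $x^*$ as an unconstrained extremal of the functional whose weighted Lagrangian is the combined integrand $w(t)\tilde L+\lambda\tilde y$; feeding this integrand into the Euler--Lagrange argument behind \eqref{euler} — that is, the integration by parts of Lemma~\ref{lem:int:parts} together with the duality relation of Lemma~\ref{lem:dual} applied to the augmented integrand — produces
\[
{}^{A}_{RL}D^{\alpha,\beta}_{b^-}\!\left[\nabla_u\!\left(w(t)\tilde L+\lambda\tilde y\right)\right]+\nabla_x\!\left(w(t)\tilde L+\lambda\tilde y\right)=0,
\]
which is exactly the asserted equation.

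I expect the multiplier rule to be the only genuine obstacle: once the existence and nonvanishing of $\lambda$ are secured, the derivation of the fractional differential equation is a verbatim repetition of Corollary~\ref{cor:EL:fund:CoV} with $L$ replaced by the augmented Lagrangian. The delicate point is therefore the normality argument — establishing that $x^*$ is not a critical point of the constraint functional and invoking the finite-dimensional Lagrange multiplier theorem on the two-parameter reduction — which I would carry out exactly as in the classical isoperimetric calculus of variations, transcribed to the FOB operators through Lemmas~\ref{lem:dual} and~\ref{lem:int:parts}.
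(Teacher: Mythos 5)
Your plan diverges from the paper's proof in its very first move, and the divergence exposes a genuine gap. The paper never invokes a two-parameter variation or the implicit function theorem: it introduces an auxiliary state $z$ with ${}^{A}_{C}D^{\alpha,\beta}_{a^{+}}z(t)=y\left(t,x(t),u(t)\right)$, $z(a)=0$, $z(b)=l$, and applies Theorem~\ref{theo} to the extended control problem, so that $\lambda$ arises as the adjoint variable attached to $z$, governed by ${}^{A}_{RL}D^{\alpha,\beta}_{b^{-}}\lambda=0$ (since the Hamiltonian does not depend on $z$) together with the nontriviality condition $(\lambda_0,\mu,\lambda)\neq(0,0,0)$. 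In your reduction, by contrast, the constraint is the scalar functional
\[
\mathcal{I}[x]={}^{\bar{A}}I^{\alpha,\beta}_{a+}[\tilde y](b)
=\int_a^b (b-s)^{\alpha-1}\bar{A}\left((b-s)^{\beta}\right)\tilde y(s)\,ds,
\]
whose integrand carries the kernel weight $\bar{w}(t):=(b-t)^{\alpha-1}\bar{A}\left((b-t)^{\beta}\right)$, exactly as the cost carries $w(t)$. Consequently the vanishing first variation of $\mathcal{J}-\lambda\left(\mathcal{I}-l\right)$ identifies $x^{*}$ as an unconstrained extremal of the integrand $w(t)\tilde L-\lambda\,\bar{w}(t)\,\tilde y$, and the Euler--Lagrange machinery of Corollary~\ref{cor:EL:fund:CoV} (or Lemma~\ref{lem:int:parts} with variations vanishing at both endpoints) then yields
\[
{}^{A}_{RL}D^{\alpha,\beta}_{b^{-}}\left[\nabla_u\left(w(t)\tilde L-\lambda\,\bar{w}(t)\,\tilde y\right)\right]
+\nabla_x\left(w(t)\tilde L-\lambda\,\bar{w}(t)\,\tilde y\right)=0,
\]
in which the factor multiplying $\tilde y$ is the nonconstant function $-\lambda\bar{w}(t)$, not a real constant. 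Your assertion that the ``combined integrand'' is $w(t)\tilde L+\lambda\tilde y$ silently discards this weight; since $\bar{w}$ is genuinely $t$-dependent except in the classical case $\alpha=1$, $\beta=0$, it cannot be absorbed into the constant $\lambda$, so your derivation does not arrive at the equation asserted in the corollary — it only matches it by fiat in the last display.

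The missing ingredient is precisely what the paper's auxiliary-state formulation supplies: there, $y$ enters the Hamiltonian \emph{unweighted}, through the dynamics ${}^{A}_{C}D^{\alpha,\beta}_{a^{+}}z=y$ multiplied by the adjoint $\lambda(t)$, and the structure of $\lambda$ is then read off from the adjoint equation ${}^{A}_{RL}D^{\alpha,\beta}_{b^{-}}\lambda=0$ and nontriviality — a step for which your scalar-multiplier reduction has no substitute. (Note, too, that the kernel of ${}^{A}_{RL}D^{\alpha,\beta}_{b^{-}}$ consists of functions of the weighted form $(b-t)^{\alpha-1}$ times a power series in $(b-t)^{\beta}$, not of constants, so the tension you would uncover by tracking $\bar{w}$ honestly is the same one the paper's own constancy claim for $\lambda$ glosses over; but as a proof of the corollary as stated, your argument stops short at exactly this point.) Your normality discussion via the implicit function theorem is the standard classical device and is fine in spirit, though it proves $\lambda\neq 0$ under an added regularity hypothesis not present in the corollary, whereas the paper extracts $\lambda\neq 0$ only along the abnormal branch $\lambda_0=0$ of the nontriviality condition.
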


\begin{proof}
We rewrite the isoperimetric problem \eqref{ip} in the following way:
\begin{gather*}
\mathcal{J}[x(\cdot), u(\cdot)]
=\frac{1}{\Gamma(\alpha)A(1)}{^{\bar{A}}I^{\alpha, \beta}_{a+}\left[
L\left(\cdot, x(\cdot), u(\cdot) \right) \right](b)}\longrightarrow \max,\\
\textup{ subject to \qquad  \qquad}\\
\begin{cases}
^{A}_{C}D^{\alpha, \beta}_{{a}^{+}}x(t) = u(t), \\
^{A}_{C}D^{\alpha, \beta}_{{a}^{+}}{z}(t)= y\left(t, x(t), u(t)\right);\\
\end{cases}\\
x(a)=x_a, \quad x(b)=x_b,\\
z(a)=0, \quad z(b)=l.
\end{gather*}
Hence, it is easy to see that the isoperimetric problem \eqref{ip}
is a particular case of problem \eqref{bp}. 
The Hamiltonian function is here given by
\[
H\left(t, x, u, \lambda_0, \lambda \right)
= \lambda_0 w(t) L\left(t, x, u \right) + \mu \cdot u+\lambda y
\]
and, by application of Theorem~\ref{theo}, we have:
\begin{itemize}
\item from the optimality condition \eqref{opt}, that
\end{itemize}
\begin{equation}
\label{ipopt}
\mu= -\lambda_0 w(t)\nabla_{u} L\left( t,x^{*}(t), u^{*}(t)\right) 
- \lambda \nabla_ y\left( t,x^{*}(t), u^{*}(t)\right);
\end{equation}
from the adjoint system \eqref{adj}, that
\begin{equation}
\label{ipadj}
^{A}_{RL}D^{\alpha, \beta}_{b^{-}} 
\begin{pmatrix} \mu \\
\lambda
\end{pmatrix}  
= 
\begin{pmatrix}
\lambda_0 w(t)\nabla_{x} L\left( t,x^{*}(t), u^{*}(t)\right) 
+ \lambda \nabla_x y\left( t,x^{*}(t), u^{*}(t)\right)\\
0
\end{pmatrix}.
\end{equation}
Now, following the non-triviality condition \eqref{trivial}, 
if $\lambda_0=0$, then 
$$
\mu= - \lambda \nabla_ y\left( t,x^{*}(t), u^{*}(t)\right)
$$ 
and 
$$ 
^{A}_{RL}D^{\alpha, \beta}_{b^{-}} \lambda =0. 
$$
As a consequence, $\lambda $ must be a non-zero constant in order to have 
$$
(\lambda_0, \mu, \lambda)\neq (0, 0, 0). 
$$
Therefore, combining \eqref{ipopt} 
and \eqref{ipadj}, we obtain that
\[
^{A}_{RL}D^{\alpha, \beta}_{b^{-}} \left[\nabla_{u} \left( w(t)\tilde{L} 
+ \lambda \tilde{y}\right)\right] 
+ \nabla_{x} \left( w(t)\tilde{L} + \lambda \tilde{y}\right)=0.
\]
The proof is complete.
\end{proof}

% ------------------------------------

\section*{Acknowledgments}

This research was funded by the Portuguese Foundation for Science and Technology (FCT) 
through CIDMA, grant number UIDB/04106/2020. The first author was supported by FCT 
through the PhD fellowship PD/BD/150273/2019.

% ------------------------------------------------

\small

% ------------------------------------------------

\end{document}